\numberwithin{equation}{section}
\newtheorem{theorem}{Theorem}[section]
\newtheorem{lemma}[theorem]{Lemma}
\newtheorem{definition}{Definition}[section]
\newtheorem{corollary}[theorem]{Corollary}
\newtheorem{remark}[theorem]{Remark}
\newcommand{\cl}[1]{\mathcal{#1}} 
\newcommand{\bb}[1]{\mathbb{#1}}
\newcommand{\sca}[1]{\left\langle#1\right\rangle} 
\newcommand{\nor}[1]{\left\Vert #1\right\Vert}
\begin{document}

\title{A Morita characterisation for algebras and spaces of operators on Hilbert spaces}

\author[G.K. Eleftherakis and E.Papapetros ]{G.K. Eleftherakis and E. Papapetros }

\address{G. K. Eleftherakis\\ University of Patras\\Faculty of Sciences\\ Department of Mathematics\\265 00 Patras Greece }

\email{gelefth@math.upatras.gr}

\address{E. Papapetros\\ University of Patras\\Faculty of Sciences\\ Department of Mathematics\\265 00 Patras Greece }

\email{e.papapetros@upatras.gr} 

\keywords{Operator algebras, $C^*$-algebras, TRO, Stable isomorphism, Morita equivalence}


\subjclass[2010]{47L30, 47L25, 46M15}

\date{}

\maketitle

\begin{abstract}We introduce the notion of $\Delta$ and $\sigma\,\Delta-$ pairs for operator algebras and characterise $\Delta-$ pairs through their categories of left operator modules over these algebras. Furthermore, we introduce the notion of $\Delta$-Morita equivalent operator spaces and prove a similar theorem about their algebraic extensions. We prove that $\sigma\Delta$-Morita equivalent operator spaces are stably isomorphic and vice versa. Finally, we study unital operator spaces, emphasising their left (resp. right) multiplier algebras, and prove theorems that refer to $\Delta$-Morita equivalence of their algebraic extensions.

\end{abstract}

\section{Introduction}

In what follows, if 
$\cl X$ is a subset of $\mathbb{B}(H_1,H_2)$ and $\cl Y$ is a subset of $\mathbb{B}(H_2,H_3)$, then we denote by $\overline{[\cl Y\cl X]}$ the norm-closure of the linear span of the set $$\left\{y\,x\in\mathbb{B}(H_1,H_3)\,,y\in \cl Y\,,x\in \cl X\right\}.$$ Similarly, if $\cl Z$ is a subset of $\mathbb{B}(H_3,H_4)$, we define the space $\overline{[\cl Z\cl Y\cl X]}.$

If $H\,,K$ are Hilbert spaces, then a linear subspace $M\subseteq \mathbb{B}(H,K)$ is called a ternary ring of operators (TRO) if $M\,M^{\star}\,M\subseteq M.$ It then follows that $M$ is an $\mathcal{A}-\mathcal{B}$ equivalence bimodule in the sense of Rieffel for the $C^{\star}$-algebras $\mathcal{A}=\overline{[M\,M^{\star}]}$ and $\mathcal{B}=\overline{[M^{\star}\,M]}$. 

We call a norm closed ternary ring of operators $M,$ $\sigma$-TRO if there exist sequences $\left\{m_i\in M\,,i\in\mathbb{N}\right\}$ and $\left\{n_j\in M\,,j\in\mathbb{N}\right\}$ such that $$\lim_{t}\sum_{i=1}^t m_i\,m_i^{\star}\,m=m\,\,,\lim_{t}\sum_{j=1}^t m\,n_j^{\star}\,n_j=m\,,\forall\,m\in M$$ and $$\nor{\sum_{i=1}^t m_i\,m_i^{\star}}\leq 1\,,\nor{\sum_{j=1}^t n_j^{\star}\,n_j}\leq 1\,,\forall\,t\in\mathbb{N}.$$ Equivalently, a TRO $M$ is a $\sigma$-TRO if and only if the $C^\star$-algebras $\overline{[M^{\star}\,M]},$ $\overline{[M\,M^{\star}]}$ have a $\sigma$-unit.

At the beginning of the 1970s, M. A. Rieffel introduced the idea of Morita equivalence of $C^{\star}$-algebras. In particular, he gave the following definitions: \\ $i)$ Two $C^{\star}$-algebras, $\mathcal{A}$ and $\mathcal{B}$, are said to be Morita equivalent if they have equivalent categories of $\star$-representations via $\star$-functors.\\ $ii)$ The same algebras are said to be strongly Morita equivalent if there exists an $\mathcal{A}-\mathcal{B}$ module of equivalence or if there exists a TRO $M$ such that the $C^{\star}$ algebras $\overline{[M^{\star}\,M]}$ and $\mathcal{A}$ (resp. $\overline{[M\,M^{\star}]}$ and $\mathcal{B}$) are $\star$-isomorphic. We write $\mathcal{A}\sim_{R}\mathcal{B}$. If $\mathcal{A}\sim_{R}\mathcal{B}$, then $\cl A$ and $\cl B$ have equivalent categories of representations. The converse does not hold. For further details, see \cite{Rie74, Rie74-2,Rie82}.

Brown, Green and Rieffel proved the following fundamental theorem for $C^{\star}$-algebras (\cite{Bro77,BGR77}).

\begin{theorem}

If $\mathcal{A}\,,\mathcal{B}$ are $C^{\star}$-algebras with $\sigma$-units, then $\mathcal{A}\sim_{R}\mathcal{B}$ if and only if they are stably isomorphic, which means that the algebras $\mathcal{A}\otimes \cl K\,,\mathcal{B}\otimes \cl K$ are $\star$-isomorphic. Here, $\cl K$ is the algebra of compact operators acting on $\ell^2(\mathbb{N})$, and $\otimes$ is the minimal tensor product.

\end{theorem}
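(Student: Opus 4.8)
The plan is to prove the two implications separately, using the linking-algebra construction and L.\,G.~Brown's stabilisation theorem as the principal tools. For the easy direction, suppose $\mathcal{A}\otimes\cl K\cong\mathcal{B}\otimes\cl K$. First I would record that strong Morita equivalence is an equivalence relation: reflexivity is immediate, symmetry follows by passing from an $\mathcal{A}$–$\mathcal{B}$ equivalence bimodule $M$ to its adjoint $M^{\star}$, and transitivity is obtained via the balanced tensor product of equivalence bimodules. I would then use the stabilisation fact that every $C^{\star}$-algebra $\mathcal{A}$ is strongly Morita equivalent to $\mathcal{A}\otimes\cl K$, with the space of columns furnishing the equivalence bimodule. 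Combining these, $\mathcal{A}\sim_{R}\mathcal{A}\otimes\cl K\cong\mathcal{B}\otimes\cl K\sim_{R}\mathcal{B}$, whence $\mathcal{A}\sim_{R}\mathcal{B}$.

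For the hard direction, assume $\mathcal{A}\sim_{R}\mathcal{B}$ and fix an $\mathcal{A}$–$\mathcal{B}$ equivalence bimodule $M$, that is, a TRO with $\overline{[M\,M^{\star}]}=\mathcal{A}$ and $\overline{[M^{\star}\,M]}=\mathcal{B}$. I would form the linking algebra
$$L=\begin{pmatrix}\mathcal{A} & M\\ M^{\star} & \mathcal{B}\end{pmatrix},$$
a $C^{\star}$-algebra in which $\mathcal{A}$ and $\mathcal{B}$ appear as the diagonal corners $pLp$ and $(1-p)L(1-p)$, where $p$ is the projection onto the first coordinate in the multiplier algebra of $L$. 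The fullness of $M$ translates precisely into the assertion that both corners are full hereditary subalgebras of $L$. The crux is then to invoke Brown's theorem: for a $\sigma$-unital $C^{\star}$-algebra, a full hereditary subalgebra is stably isomorphic to the whole algebra. This yields $\mathcal{A}\otimes\cl K\cong L\otimes\cl K\cong\mathcal{B}\otimes\cl K$, completing the proof.

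The main obstacle is verifying the hypotheses of Brown's theorem and Brown's theorem itself. I would need to check that $L$ is $\sigma$-unital; this is exactly where the $\sigma$-unit hypothesis on $\mathcal{A}$ and $\mathcal{B}$ enters, since a countable approximate unit for $L$ is assembled from those of the corners together with the sequences $\{m_i\}$, $\{n_j\}$ witnessing that $M$ is a $\sigma$-TRO. Brown's stabilisation theorem is the genuinely deep ingredient: it fails without $\sigma$-unitality, and its proof rests on a delicate approximate-unit argument producing partial isometries in the multiplier algebra of $L\otimes\cl K$ that implement the isomorphism. I would treat it as the black box on which the entire hard direction depends.
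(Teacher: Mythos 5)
This statement is the Brown--Green--Rieffel theorem, which the paper only quotes with the citations \cite{Bro77,BGR77}; there is no internal proof to compare against, so your proposal can only be measured against the classical argument --- and it reproduces that argument faithfully and correctly in outline. The easy direction (stable isomorphism $\Rightarrow$ strong Morita equivalence) via symmetry/transitivity of $\sim_R$ and the column-space bimodule implementing $\mathcal{A}\sim_R\mathcal{A}\otimes\cl K$ is standard and, worth noting, needs no $\sigma$-unit hypothesis at all. The hard direction via the linking algebra $L$, the identification of $\mathcal{A}$ and $\mathcal{B}$ with the complementary full hereditary corners $pLp$ and $(1-p)L(1-p)$, and the appeal to Brown's stabilisation theorem (full hereditary subalgebras of $\sigma$-unital $C^{\star}$-algebras are stably isomorphic to the ambient algebra) is exactly the route of \cite{BGR77}, and treating Brown's theorem as a black box is legitimate here since that is precisely the content of the cited paper \cite{Bro77}. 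One small simplification: to see that $L$ is $\sigma$-unital you do not need the $\sigma$-TRO sequences $\{m_i\},\{n_j\}$ --- if $(u_n)$ and $(v_n)$ are countable approximate units for $\mathcal{A}$ and $\mathcal{B}$, then the diagonal elements $\mathrm{diag}(u_n,v_n)$ already form a countable approximate unit for $L$, because $\overline{[\mathcal{A}M]}=M=\overline{[M\mathcal{B}]}$ forces them to act approximately as a unit on the off-diagonal corners as well (the paper's remark that $M$ is a $\sigma$-TRO iff $\overline{[MM^{\star}]}$ and $\overline{[M^{\star}M]}$ are $\sigma$-unital shows your formulation is equivalent, just more roundabout). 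You should also make explicit, when invoking Brown's theorem, that \emph{both} the corner and the ambient algebra must be $\sigma$-unital --- the corner is $\sigma$-unital by hypothesis and the ambient algebra is $L$, which is exactly the verification you flagged --- and that fullness of the corners follows since $\overline{[LpL]}$ contains $\overline{[M^{\star}M]}=\mathcal{B}$ and $\overline{[MM^{\star}]}=\mathcal{A}$, hence equals $L$.
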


The next step in this theory came from Blecher, Muhly and Paulsen. They defined the notion of strong Morita equivalence $\sim_{BMP}$ for operator algebras, self-adjoint or not, and they proved that if $\mathcal{A}\sim_{BMP}\mathcal{B}$, their categories of left operator modules are equivalent (\cite{BMP00}). Later, Blecher proved that the converse is also true (\cite{Ble01-2}). Therefore, he proved that two $C^\star$-algebras $\cl A, \cl B$ have equivalent categories of left operator modules if and only if $\cl A \sim_R\cl B$.

A third notion of Morita equivalence was introduced by the first author of this article. According to this theory, two operator algebras, $\mathcal{A}\,,\mathcal{B}$, are said to be $\Delta$-equivalent and we write $\mathcal{A}\sim_{\Delta}\mathcal{B}$ if they have completely isometric representations $\alpha: \cl A\rightarrow \alpha(\cl A)\subseteq \bb B(H), \;\;\beta: \cl B\rightarrow \beta(\cl B)\subseteq \bb B(K)$ and there exists a TRO $M\subseteq \bb B(H,K)$ such that $$\alpha(\cl A)=\overline{[M^{\star}\,\beta(\mathcal{B})\,M]},\;\;\beta(\cl B)=\overline{[M\,\alpha(\mathcal{A})\,M^{\star}]}$$ (\cite{Ele14}). If $M$ is a $\sigma$-TRO, we write $\mathcal{A}\sim_{\sigma\,\Delta}\mathcal{B}.$ 

G .K. Eleftherakis proved that $\mathcal{A}\sim_{\sigma\,\Delta}\mathcal{B}$ if and only if $\mathcal{A}\,,\mathcal{B}$ are stably isomorphic (\cite{Elest}). If we define $\cl C=\overline{[M^{\star}\,M]},\;\;\mathcal{D}=\overline{[M\,M^{\star}]}$, then the spaces $$\cl A_0=\overline{\alpha(\cl A)+\cl C},\;\;\; \cl B_0=\overline{\beta(\cl B)+\cl D}$$ 
are operator algebras with contractive approximate identities, even if $\cl A, \cl B$ do not have, and they are also $\Delta$-equivalent since $$\cl A_0=\overline{[M^{\star}\,\mathcal{B}_0\,M]},\;\;\;\mathcal{B}_0=\overline{[M\,\mathcal{A
}_0\,M^{\star]}}.$$ Also observe that
\begin{equation}\label{0000}
\cl A_0=\overline{\cl A_0\cl C}=\overline{\cl C\cl A_0},\;\;\cl B_0=
\overline{\cl B_0\cl D}=\overline{\cl D\cl B_0}
\end{equation}
and that $\alpha(\cl A)$ (resp. $\beta(\cl B)$) is an ideal of $\cl A_0$ (resp. $\cl B_0$).

Generally, if $\cl A_0$ is an operator algebra and $\cl C\subseteq \cl A_0$ 
is a $C^\star$-algebra satisfying relation (\ref{0000}), we call $(\cl A_0,\cl C)$ a $\Delta$-pair. Furthermore, if $\cl C$ has a $\sigma$-unit, we call $(\cl A_0 ,\cl C)$ a $\sigma\Delta$-pair.

In section 2, we characterise the $\Delta$-equivalence and stable isomorphism of $\Delta$-pairs 
under the notion of equivalence of categories of their left operator modules. In section 3, using the above theory, we characterise the $\Delta$-equivalence and stable isomorphism of the operator spaces $\cl X$ and $\cl Y$ through the equivalence of the categories of left operator modules of operator algebras $\cl A_{\cl X}, \cl A_{\cl Y}$, 
on which $\cl X$ and $\cl Y$ naturally embed completely isometrically. If $\cl X$ and $\cl Y$ are unital operator spaces, we get stronger results using the algebras $\Omega_{\cl X}, \Omega_{\cl Y}$ generated by $\cl X, \cl Y$ and the diagonals of their multiplier algebras (see section 4).

If $\cl X$ is an operator space, then $\cl K\otimes \cl X$ is completely isometrically isomorphic with the space $K_\infty(\cl X)$, which is the norm closure of the finitely supported matrices in $M_\infty(\cl X).$ Here, $M_\infty(\cl X)$ is the space of $\infty\times\infty$ matrices, which define bounded operators. Also, by $\cl X\otimes^h\cl Y$, we denote 
the Haagerup tensor product of the operator spaces $\cl X$ and $\cl Y.$ If $\cl A$ is an operator algebra, $\cl X$ is a right $\cl A$-module and $\cl Y$ is a left $\cl A$-module, we denote by $\cl X\otimes^h_{\cl A}\cl Y$ the balanced Haagerup tensor product of $\cl X$ and $\cl Y$ over $\cl A$ (\cite{BMP00}).

For further details about operator spaces, operator algebras, Morita theory and category theory, we refer the reader to \cite{Bas62, BleLeM04, EffRua00, Paul, Pis03, IR}.

\section{$\Delta$-Morita equivalence of operator algebras }

\begin{definition}

Let $\mathcal{A}\subseteq \mathbb{B}(H)\,,\mathcal{B}\subseteq \mathbb{B}(K)$ be operator algebras. We call them TRO-equivalent (resp. $\sigma$-TRO equivalent) if there exists a TRO (resp. $\sigma$-TRO) $M\subseteq \mathbb{B}(H,K)$ such that $$\mathcal{A}=\overline{[M^{\star}\,\mathcal{B}\,M]}\,\,,\mathcal{B}=\overline{[M\,\mathcal{A}\,M^{\star}]}.$$
We write $\mathcal{A}\sim_{TRO} \mathcal{B}$, resp. $\mathcal{A}\sim_{\sigma TRO}\mathcal{B}.$

\end{definition}

\begin{definition}

Let $\mathcal{A}\,,\mathcal{B}$ be operator algebras. We call them $\Delta$-equivalent (resp. $\sigma\Delta$- equivalent) if there exist completely isometric homomorphisms $a:\mathcal{A}\to \mathbb{B}(H)$ and $\beta:\mathcal{B}\to \mathbb{B}(K)$ such that $a(\mathcal{A})\sim_{TRO}\beta(\mathcal{B})$ (resp. $a(\mathcal{A})\sim_{\sigma TRO}\beta(\mathcal{B}).$) We write $\mathcal{A}\sim_{\Delta}\mathcal{B}$ (resp. $\mathcal{A}\sim_{\sigma\,\Delta}\mathcal{B}$)

\end{definition}

\begin{definition}

Let $\mathcal{A}$ be an operator algebra and $C$ be a $C^{\star}$-algebra such that $C\subseteq \mathcal{A}$. If $\mathcal{A}=\overline{[\mathcal{A}\,C]}=\overline{[C\,\mathcal{A}]}$, we call the pair $(\mathcal{A},C)$ a $\,\,\Delta$-pair. If $C$ has a $\sigma$-unit, we call $(\mathcal{A},C)$ a $\sigma\Delta$-pair.

\end{definition}

If $\mathcal{A}$ is an operator algebra, then $\;_{\mathcal{A}}OMOD$ is the category with objects the essential left $\mathcal{A}$-operator modules, namely operator spaces $U$ such that there exists a completely contractive bilinear map $\theta:\mathcal{A}\times U\to U$ such that $U=\overline{[\mathcal{A}\,U]}$, where $\mathcal{A}\,U=\left\{\theta(a,x)\in U\,,a\in\mathcal{A}\,,x\in U\right\}$. For our convenience, we write $a\,x$ instead of $\theta(a,x)$. If $U_1\,,U_2\in\;_{\mathcal{A}}OMOD$ is the space of homomorphisms between $U_1$ and $U_2$ is the space of completely bounded maps, which are left operator maps over $\mathcal{A}$, and we denote this space by $\;_{\mathcal{A}}CB(U_1,U_2)$. Observe that if $(\mathcal{A},C)$ is a $\Delta$-pair, then $\;_{\mathcal{A}}OMOD$ is a subcategory of $\:_{C}OMOD$.

A functor $\cl F: \;_{\cl A}OMOD\rightarrow \;_{\cl B}OMOD$ is called completely contractive if for 
all $U_1, U_2 \in \;_{\mathcal{A}}OMOD$ the map $$\cl F: \;_{\mathcal{A}}CB(U_1,U_2)
\rightarrow\;_{\mathcal{B}}CB(\cl F(U_1),\cl F(U_2))$$
is completely contractive.

\begin{definition}

Let $(\mathcal{A},C)\,,(\mathcal{B},D)$ be $\Delta$-pairs. We call them $\Delta$-Morita equivalent if there exist completely contractive functors $\mathcal{F}: \;_{C}OMOD\to \;_{D}OMOD$ and $G:\;_{D}OMOD\to \;_{C}OMOD$ such that $$G\circ \mathcal{F}\cong Id_{\;_{C}OMOD}\,\,,\mathcal{F}\circ G\cong Id_{\;_{D}OMOD}$$ and $$G|_{\;_{\mathcal{B}}OMOD}\circ \mathcal{F}|_{\;_{\mathcal{A}}OMOD}\cong Id_{\;_{\mathcal{A}}OMOD}\,\,,\mathcal{F}|_{\;_{\mathcal{A}}OMOD}\circ G|_{\;_{\mathcal{B}}OMOD}\cong Id_{\;_{\mathcal{B}}OMOD}.$$ Here, $\cong$ is the natural equivalence. 

\end{definition}

If $\cl A, \cl B, \cl C, \cl D$ are operator algebras such that 
$\cl C\subseteq\cl A, \cl D\subseteq\cl B$ and $\cl A\sim_\Delta\cl B, \;\;\cl C\sim_\Delta\cl D$, we say that $\Delta$-equivalence 
is implemented in both cases by the same TRO if there exist completely 
isometric homomorphisms $\alpha: \cl A\rightarrow \alpha(\cl A)\subseteq \bb B(H)\,, \;\beta:\cl B\rightarrow 
\beta(\cl B)\subseteq \bb B(K) $ and a TRO $M\subseteq \bb B(H,K)$ such that $$\alpha(\mathcal A)=\overline{[M^{\star}\,\beta(\mathcal B)\,M]}\,\,,\beta(\mathcal{B})=\overline{[M\,\alpha(\mathcal{A})\,M^{\star}]}$$ and 
$$\alpha(\mathcal C)=\overline{[M^{\star}\,\beta(\mathcal D)\,M]}\,\,,\beta(\mathcal{D})=\overline{[M\,\alpha(\mathcal{C})\,M^{\star}]}.$$

We now prove our main theorem for operator algebras.

\begin{theorem}

\label{main}
Let $(\mathcal{A},C)\,,(\mathcal{B},D)$ be $\Delta$-pairs. The following are equivalent:\\
$i)\,\mathcal{A}\sim_{\Delta}\mathcal{B}\,\,,C\sim_{\Delta}D$, where $\Delta$-equivalence is implemented in both cases by the same TRO.\\
$ii)$ The pairs $(\mathcal{A},C)\,,(\mathcal{B},D)$ are $\Delta$-Morita equivalent.

\end{theorem}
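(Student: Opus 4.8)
The plan is to prove the two implications separately: in one direction I build the equivalence of module categories from the given TRO, and in the other I reconstruct the TRO from the categorical data.

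For $(i)\Rightarrow(ii)$, suppose a single TRO $M\subseteq\bb B(H,K)$ implements both equivalences, so that (identifying $\cl A,C$ with $\alpha(\cl A),\alpha(C)$ and $\cl B,D$ with $\beta(\cl B),\beta(D)$) we have $C=\overline{[M^\star D M]}$, $D=\overline{[MCM^\star]}$ and $\cl A=\overline{[M^\star\cl B M]}$, $\cl B=\overline{[M\cl A M^\star]}$. Since $MM^\star M\subseteq M$, the space $M$ is a $D$–$C$ bimodule and $M^\star$ a $C$–$D$ bimodule, so I would set
$$\mathcal F(U)=\overline{M\otimes^h_C U},\qquad G(V)=\overline{M^\star\otimes^h_D V},$$
with the evident action on morphisms. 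First I would check these are completely contractive functors into $\;_{D}OMOD$ and $\;_{C}OMOD$; essentiality of the image follows from $D=\overline{[MCM^\star]}$ together with $\overline{[CU]}=U$. The natural equivalences $G\circ\mathcal F\cong\mathrm{Id}$ and $\mathcal F\circ G\cong\mathrm{Id}$ reduce, by associativity of the Haagerup tensor product, to the bimodule isomorphisms $M^\star\otimes^h_D M\cong C$ and $M\otimes^h_C M^\star\cong D$, which are the content of the $\sim_\Delta$ identities at the $C$–$D$ level; this is where I would invoke the equivalence-bimodule property of the TRO.

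The genuinely new point in this direction is the restriction clause. For $U\in\;_{\cl A}OMOD$ I must give $\mathcal F(U)$ a compatible left $\cl B$-action and show $\mathcal F(U)\in\;_{\cl B}OMOD$. Here I would use the identity $\cl B M=\overline{[M\cl A M^\star]}M=\overline{[M\cl A\,(M^\star M)]}=\overline{[M\cl A C]}=\overline{[M\cl A]}$, valid because $(\cl A,C)$ is a $\Delta$-pair, to define $b\cdot(m\otimes u)$ by writing $bm=\lim_k\sum m_k a_k$ and declaring it equal to $\lim_k\sum m_k\otimes a_k u$; the task is to verify this is well defined relative to the $C$-balancing and agrees with the $D$-action. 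The symmetric check for $G$, together with the fact that the four natural equivalences restrict, then yields the $\Delta$-Morita equivalence.

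For $(ii)\Rightarrow(i)$ the strategy is reconstruction, and this is the direction I expect to contain the main obstacle. Working first at the $C$–$D$ level, the completely contractive equivalence $\;_{C}OMOD\cong\;_{D}OMOD$ places us in the setting of the Blecher and Eleftherakis characterisations: I would produce the equivalence bimodule as $N:=\mathcal F(C)$, with $C$ viewed as an essential left module over itself. Right multiplication by $c\in C$ is an element of $\;_{C}CB(C,C)$, so applying $\mathcal F$ equips $N$ with a right $C$-action and hence a $D$–$C$ bimodule structure, and the natural equivalences force $N^\star\otimes^h_D N\cong C$, $N\otimes^h_C N^\star\cong D$. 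Representing the associated linking structure faithfully on Hilbert space realises $N$ as a concrete TRO $M$ with $C\cong\overline{[M^\star D M]}$ and $D\cong\overline{[MCM^\star]}$, i.e. $C\sim_\Delta D$ via $M$. The decisive and hardest step is to upgrade this to the algebra level with the \emph{same} $M$: because $\;_{\cl A}OMOD\subseteq\;_{C}OMOD$ and the functors restrict, left multiplications by elements of $\cl A$ are again morphisms in $\;_{C}OMOD$, so $\mathcal F$ transports the full $\cl A$-action and not merely the $C$-action. Applying the reconstruction to $\cl A$ as a left $\cl A$-module, and using $\mathcal F|_{\cl A}(\cl A)\in\;_{\cl B}OMOD$ together with $G|_{\cl B}\circ\mathcal F|_{\cl A}\cong\mathrm{Id}_{\cl A}$, should force the reconstructed bimodule $M$ to intertwine the $\cl A$- and $\cl B$-multiplications and yield $\cl B=\overline{[M\cl A M^\star]}$, $\cl A=\overline{[M^\star\cl B M]}$. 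The hard part is precisely guaranteeing that one single $M$ serves both levels and that the non-self-adjoint algebra identities survive the passage from the abstract categorical equivalence back to concrete operators — that the $\cl A$-module maps reconstruct exactly $\overline{[M^\star\cl B M]}$ rather than merely something Morita-equivalent to it — and it is here that the compatibility clauses in the definition of $\Delta$-Morita equivalence must be used in full force.
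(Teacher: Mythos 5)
Your proof of $(i)\Rightarrow(ii)$ has a genuine error at its foundation: you assert that $M$ is a $D$--$C$ bimodule and balance the tensor products over $C$ and $D$, setting $\cl F(U)=M\otimes^h_C U$. But a TRO $M$ implementing a $\Delta$-equivalence need not carry any right $C$-action or left $D$-action: the hypotheses give $C=\overline{[M^\star DM]}$ and $D=\overline{[MCM^\star]}$, not $MC\subseteq M$ or $DM\subseteq M$. For a concrete failure take $H=K$, $M=\bb C I_H$, and $C=D$ any nontrivial unital $C^\star$-algebra on $H$ (with $\cl A=\cl B$); then condition $(i)$ holds with this $M$, yet $DM=D\not\subseteq M$, so $M\otimes^h_C U$ is not even defined, and your claimed isomorphism $M^\star\otimes^h_D M\cong C$ cannot hold in any reasonable interpretation (the left side would collapse to something one-dimensional). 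The same conflation appears in your identity $\cl BM=\overline{[M\cl A(M^\star M)]}=\overline{[M\cl A C]}$, which silently uses $\overline{[M^\star M]}=C$ -- false in general. The paper avoids all of this by balancing over $E=\overline{[M^\star M]}$ and $E'=\overline{[MM^\star]}$, with respect to which $M$ \emph{is} automatically a bimodule, after first checking that every $U\in\;_{\cl A}OMOD$ is an essential $E$-module (via $\overline{[EU]}=\overline{[M^\star M\cl A U]}=U$). Your construction could be repaired either this way or by replacing $M$ with $\overline{[DM]}=\overline{[MC]}$, which one can check is a TRO and an honest $D$--$C$ equivalence bimodule implementing the same equivalences -- but as written the functor is ill-defined. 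Separately, even granting the balancing, the well-definedness of the $\cl B$-action that you "declare" via $bm=\lim_k\sum m_k a_k$ is exactly the analytic heart of this direction; the paper proves it by constructing the maps $f_v$ for $v\in[M\cl AM^\star]$ and establishing $\|f_v\|_{cb}\le\|v\|$ with a TRO approximate identity $m_\lambda\in\bb M_{n,1}(M^\star)$, and nothing in your sketch substitutes for that estimate.

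In $(ii)\Rightarrow(i)$ your skeleton (reconstruct $N=\cl F(C)$ \`a la Blecher, represent the linking structure, then upgrade to the algebra level) is the right general shape and matches the paper's reliance on \cite{Ble} and \cite{Ble01-2}, but you stop precisely at the decisive step, conceding that the single-TRO upgrade "should force" the identities. The paper closes this gap with two concrete moves you are missing: first, from the two expressions of the same functor, $\cl F(U)\cong\cl Y\otimes^h_{\cl A}U$ on $\;_{\cl A}OMOD$ and $\cl F(U)\cong\cl Y_0\otimes^h_C U$ on $\;_C OMOD$ (with $\cl Y=\cl F(\cl A)$, $\cl Y_0=\cl F(C)$, $\cl X=G(\cl B)$, $\cl X_0=G(D)$), it derives by associativity the computation $\cl X_0\otimes^h_D\cl B\otimes^h_D\cl X_0^\star\cong\cl A$; second, a standalone lemma shows that whenever $\cl A\cong M\otimes^h_D\cl B\otimes^h_D M^\star$ with $\overline{[M^\star M]}\cong D$, one can realise $M$ concretely as a TRO $r(M)$ on the Hilbert space $M\otimes^h_D K$ (injectivity plus Harris--Kaup gives complete isometry of $r$) with $a(\cl A)=\overline{[r(M)\beta(\cl B)r(M)^\star]}$ and $\beta(\cl B)=\overline{[r(M)^\star a(\cl A)r(M)]}$, which yields both equivalences via the same TRO. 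Without these two steps -- or some replacement for them -- your proposal does not prove the implication, so the argument as submitted is incomplete in this direction and incorrect in the other.
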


\begin{proof}

We start with the proof of $i)\implies ii).$

Assume that $\mathcal{A}=\overline{[M^{\star}\,\mathcal{B}\,M]}$ and $\mathcal{B}=\overline{[M\,\mathcal{A}\,M^{\star}]}$ and also $C=\overline{[M^{\star}\,D\,M]}\,,D=\overline{[M\,C\,M^{\star}]}$ for the same TRO $M\subseteq \mathbb{B}(H,K).$

Let $U\in\;_{\mathcal{A}}OMOD$ and $E=\overline{[M^{\star}\,M]}.$ We notice that $$\overline{[E\,U]}=\overline{[M^{\star}\,M\,U]}=\overline{[M^{\star}\,M\,\mathcal{A}\,U]}=\overline{[M^{\star}\,M\,M^{\star}\,\mathcal{B}\,M\,U]}\subseteq \overline{[M^{\star}\,\mathcal{B}\,M\,U]}=\overline{[\mathcal{A}\,U]}=U$$ (so $U$ is a left $E$-operator module).

We set $\mathcal{F}(U)=M\otimes_{E}^h U.$ We fix $$v=\sum_{i=1}^r m_i\,a_i\,n_i^{\star}\in [M\,\mathcal{A}\,M^{\star}].$$

We define the bilinear map $$f_{v}:M\times U\to M\otimes^{h}_{E}U\,,f_{v}(\ell,x)=\sum_{i=1}^r m_i\otimes_{E}a_i\,n_i^{\star}\,\ell\,x$$
and then there exists a linear map denoted again by $f_{v}:M\otimes U\to M\otimes^{h}_{E}U$ such that $$f_{v}(\ell\otimes x)=\sum_{i=1}^r m_i\otimes_{E}a_i\,n_i^{\star}\,\ell\,x\,\,,\ell\in M\,,x\in U$$

Let $$u=\sum_{j=1}^k \ell_{j}\otimes x_j\in M\otimes U.$$
Since $M$ is a TRO, there exists a net $m_{\lambda}=(m_{1\,\lambda}^{\star},...,
m_{n\,\lambda}^{\star})^{t}\in \mathbb{M}_{n\,,1}(M^{\star})$ such that $||m_{\lambda}^{\star}||\leq 1\,,\forall\,\lambda\in\Lambda$ and also $m_{\lambda}\,m_{\lambda}^{\star}\,\ell\to \ell\,,\forall\,\ell\in M,$ (see \cite{BleLeM04}).

Let $\epsilon>0.$ We choose $\lambda_{0}\in\Lambda$ such that for every $\lambda\geq \lambda_0$ holds 

$$||f_{v}(u)||-\epsilon=\nor{\sum_{i=1}^r \sum_{j=1}^k m_i\otimes_{E}a_i\,n_i^{\star}\,\ell_{j}\,x_j}-\epsilon\leq \nor{\sum_{i=1}^r \sum_{j=1}^k m_{\lambda}\,m_{\lambda}^{\star}\,m_i\otimes_{E}a_i\,n_i^{\star}\,\ell_{j}\,x_j}$$

Using now the fact that $||y\otimes_{C}b||\leq ||y||\,||b||\,,y\in M_{p,q}(M^{\star})\,,b\in M_{q, s}(U), \;\;p,q,s \in \bb N,$ we get 

\begin{align*}
 ||f_{v}(u)||-\epsilon&\leq \nor{\sum_{i=1}^r \sum_{j=1}^k m_{\lambda}\otimes_{E}m_{\lambda}^{\star}\,m_i\,a_i\,n_i^{\star}\,\ell_{j}\,x_j}= \nor{m_{\lambda}\otimes_{E}\sum_{i=1}^r m_i\,a_i\,n_i^{\star}\, \sum_{j=1}^k m_{\lambda}^{\star}\,\ell_{j}\,x_j}\\&\leq ||m_{\lambda}||\,\nor{\sum_{i=1}^r m_i\,a_i\,n_i^{\star}}\,\nor{\sum_{j=1}^k m_{\lambda}^{\star}\,\ell_{j}\,x_j}\\&\leq ||v||\,\nor{\sum_{j=1}^k m_{\lambda}^{\star}\,\ell_{j}\otimes x_j}_{h}\leq ||v||\,||(m_{\lambda}^{\star}\,\ell_{1},...,m_{\lambda}^{\star}\,\ell_{k})||\,\nor{\begin{pmatrix}x_1\\
 ...\\
 x_k\end{pmatrix}}\\&\leq ||v||\,||(\ell_{1},...,\ell_{k})||\,\nor{\begin{pmatrix}x_1\\
 ...\\
 x_k\end{pmatrix}}
\end{align*}

We have shown that the above procedure is independent of $\lambda$, so if $\epsilon\to 0^{+}$, and by taking infimum over all representations of $u$, we get $||f_{v}(u)||_h\leq ||v||_h\,||u||_{h}.$ Therefore, $f_{v}$ is continuous and contractive, since $||f_{v}||\leq ||v||_h.$ Let $n\in\mathbb{N}$ and the corresponding map $$(f_{v})_{n}:\mathbb{M}_{n}(M\otimes U)\to \mathbb{M}_{n}(M\otimes_{E}^h U).$$ We have to prove that $(f_{v})_{n}$ is contractive, that is, $f_{v}$ is completely contractive with respect to the Haagerup norm. This statement is true since $$\mathbb{M}_{n}(M)\otimes_{\mathbb{M}_{n}(E)}^h \mathbb{M}_{n}(U)\cong \mathbb{M}_{n}(M\otimes_{E}^h U)\,,n\in\mathbb{N}.$$

For more details check \cite{BleLeM04}.

Furthermore, for every $\ell\in M\,,z^{\star}\,w\in M^{\star}\,M\,,x\in U$ holds \begin{align*}
 f_{v}(\ell\,z^{\star}\,w\otimes x)&=\sum_{i=1}^r m_i\otimes_{E}a_i\,n_i^{\star}\,\ell\,z^{\star}\,w\,x\\&=\sum_{i=1}^r m_i\otimes_{E}a_i\,n_i^{\star}\,\ell\,(z^{\star}\,w\,x)\\&=f_{v}(\ell\otimes z^{\star}\,w\,x)
\end{align*}

Since $f_{v}$ is continuous and linear and $E=\overline{[M^{\star}\,M]}$, we get $f_{v}(\ell\,e\otimes x)=f_{v}(\ell\otimes e\,x)$ for every $\ell\in M\,,e\in E\,,x\in U.$ Therefore, $f_{v}$ extends to a linear and completely contractive map $$\hat{f_{v}}:M\otimes_{E}^h U\to M\otimes_{E}^h U$$ with the property $$\hat{f_{v}}(\ell\otimes_{E}x)=\sum_{i=1}^r m_i\otimes_{E}a_i\,n_i^{\star}\,\ell\,x\,,\ell\in M\,,x\in U$$

So, we have the map $\hat{f}:[M\,\mathcal{A}\,M^{\star}]\to CB(\mathcal{F}(U))\,\,,v\mapsto \hat{f}_{v}$, which is completely contractive and therefore extends to a completely contractive map denoted again by $\hat{f}:\mathcal{B}\to CB(\mathcal{F}(U))$, where $CB(\mathcal{F}(U))$ is the space of all linear and completely bounded maps of $\mathcal{F}(U)$ to itself. The algebra $\mathcal{B}$ acts to $\mathcal{F}(U)$ via the map $$\hat{\theta}:\mathcal{B}\times \mathcal{F}(U)\to \mathcal{F}(U)\,,\hat{\theta}(b,y)=\hat{f}(b)(y),$$  such that $\overline{[\mathcal{B}\,\mathcal{F}(U)]}=\mathcal{F}(U)$ and thus $\mathcal{F}(U)=M\otimes_{E}^h U\in\;_{\mathcal{B}}OMOD.$

Therefore, we have a correspondence between the objects 
$$\mathcal{F}:\;_{\mathcal{A}}OMOD\to \;_{\mathcal{B}}OMOD, U\mapsto \mathcal{F}(U)=
M\otimes_{E}^h U.$$

Let $\,\,U_1\,,U_2\in\;_{\mathcal{A}}OMOD.$ We fix $f\in\;_{\mathcal{A}}CB(U_1,U_2)$ and we define the map $$\mathcal{F}(f):M\times U_1\to M\otimes_{E}^h U_2=\mathcal{F}(U_2)\,\,,\mathcal{F}(f)(\ell,x):=\ell\otimes_{E} f(x)$$

The map $\mathcal{F}(f)$ is linear, completely contractive and $E$-balanced, so we denote again by $\mathcal{F}(f)$ the linear and completely contractive map $$\mathcal{F}(f):M\otimes_{E}^h U_1=\mathcal{F}(U_1)\to M\otimes_{E}^h U_2=\mathcal{F}(U_2)$$ with the property $$\mathcal{F}(f)(\ell\otimes_{E}x)=\ell\otimes_{E}f(x)\,,\ell\in M\,,x\in U$$

Furthermore, \begin{align*}
 &\mathcal{F}(f)(m\,a\,n^{\star}\cdot \ell\otimes_{E} x)=\mathcal{F}(f)(m\otimes_{E}a\,n^{\star}\,\ell\,x)=m\otimes_Ef(an^*lx)=\\&m\otimes_{E}a\,n^{\star}\,\ell\,f(x)=m\,a\,n^{\star}\cdot \mathcal{F}(f)(\ell\otimes_{E}x)\,,m\,,n\,,\ell\in M\,,x\in U_1\,,a\in\mathcal{A}
\end{align*}and since 

$\mathcal{B}=\overline{[M\,\mathcal{A}\,M^{\star}]}$, we have 

 $$\mathcal{F}(f)(b\cdot y) =b\cdot \mathcal{F}(f)(y)\,,b\in\mathcal{B}\,,y\in M\otimes_{E}^h U_1.$$

We proved that $\mathcal{F}(f)\in\;_{\mathcal{B}}CB(\mathcal{F}(U_1),\mathcal{F}(U_2))$

Therefore, we have a completely contractive map $$\mathcal{F}:\;_{\mathcal{A}}CB(U_1,U_2)\to \;_{\mathcal{B}}CB(\mathcal{F}(U_1),\mathcal{F}(U_2))\,,f\mapsto \mathcal{F}(f)$$

Similarly, we have a functor $G:\;_{\mathcal{B}}OMOD\to \;_{\mathcal{A}}OMOD$ defined as $$G(V)=M^{\star}\otimes_{E'}^h V\,,V\in\;_\mathcal{B}OMOD,$$ where $E'=\overline{[M\,M^{\star}]}$
and the corresponding functor 

$$G:\;_{\mathcal{B}}CB(V_1,V_2)\to \;_{\mathcal{A}}CB(G(V_1),G(V_2))$$

for every $V_1,V_2\in\;_{\mathcal{B}}OMOD$.

 We are going to prove that 
 $G$ is the natural inverse of $\mathcal{F}.$ If 
 $U\in\;_{\mathcal{A}}OMOD$, we have that

\begin{align*}
 (G\,\mathcal{F})(U)&=G(M\otimes_{E}^h U)\\&=M^{\star}\otimes_{E'}^h (M\otimes_{E}^h U)\cong (M^{\star}\otimes_{E'}^h M)\otimes_{E}^h U\cong E\otimes_{E}^h U\\& \cong U=Id_{\mathcal{C}^{OMOD}}(U)
\end{align*}

(Similarly, $(\mathcal{F}\,G)(V)\cong V\,,\forall\,V\in\;_{\mathcal{B}}OMOD$).

We note that if $U\in\;_{\mathcal{A}}OMOD$, then there exists an isometry $$f_{U}:(G\,F)(U)=M^{\star}\otimes_{E'}^h (M\otimes_{E}^h U)\to U=Id_{\;_{\mathcal{A}}OMOD}(U)$$ such that $$f_{U}(m^{\star}\otimes_{E'}(\ell\otimes_{E}x))=m^{\star}\,\ell\,x\,,m\,,\ell\in M\,,x\in U.$$

We have to prove that for every $U_1\,,U_2\in\;_{\mathcal{A}}OMOD\,\,,f\in\;_{\mathcal{A}}CB(U_1,U_2)$, the following diagram

\begin{center}

\begin{tikzcd}[column sep=huge,row sep=huge]

(G\,\mathcal{F})(U_1) \arrow[d,"(G\,\mathcal{F})(f)" ] \arrow[r, "f_{U_1}"] \arrow[d, black]

& U_1 \arrow[d, "f" black] \\

(G\,\mathcal{F})(U_2) \arrow[r, black, "f_{U_2}" black]

& U_2

\end{tikzcd}

\end{center}

is commutative, or equivalently, the following diagram is commutative 

\begin{center}

\begin{tikzcd}[column sep=huge,row sep=huge]

M^{\star}\otimes_{E'}^h (M\otimes_{E}^h U_1) \arrow[d,"G(\mathcal{F}(f))" ] \arrow[r, "f_{U_1}"] \arrow[d, black]

& U_1 \arrow[d, "f" black] \\

M^{\star}\otimes_{E'}^h (M\otimes_{E}^h U_2) \arrow[r, black, "f_{U_2}" black]

& U_2

\end{tikzcd}

\end{center}

So, we have to prove that $f\circ f_{U_1}=f_{U_2}\circ G(\mathcal{F}(f))$.

Indeed,

\begin{align*}
&(f_{U_2}\circ G(\mathcal{F}(f))(m^{\star}\otimes_{E'}(\ell\otimes_{E}x))=f_{U_2}(G(\mathcal{F}(f))(m^{\star}\otimes_{E'}(\ell\otimes_{E}x))=\\&f_{U_2}(m^{\star}\otimes_{E'}\mathcal{F}(f)(\ell\otimes_{E}x))=f_{U_2}(m^{\star}\otimes_{E'}\ell\otimes_{E}f(x))=m^{\star}\,\ell\,f(x)=f(m^{\star}\,\ell\,x)
\end{align*}

and on the other hand $$(f\circ f_{U_1})(m^{\star}\otimes_{E'}(\ell\otimes_{E}x))=f(f_{U_1}(m^{\star}\otimes_{E'}(\ell\otimes_{E}x))=f(m^{\star}\,\ell\,x)$$
for every $m\,,\ell\in M\,,x\in U_1.$

The functor $\mathcal{F}$ extends to a functor $\mathcal{F}^{\delta}$ to the category $\;_{\cl C}OMOD$ in the same sense that is $\mathcal{F}^{\delta}(U)=M^{\star}\otimes_{C}^h U\,,U\in \;_{\cl C}^{OMOD}$ and $\mathcal{F}^{\delta}|_{\;_\mathcal{A}OMOD}=\mathcal{F}$ (similarly for $G^{\delta}$). 
In conclusion, we have proved that the pairs $(\mathcal{A},C)\,,(\mathcal{B},D)$ are $\Delta$-Morita equivalent.\\

We are now going to complete the remaining proof of $ii)\implies i)$. Suppose that the pairs $(\mathcal{A},\cl C)\,,(\mathcal{B},\cl D)$ are $\Delta$-Morita equivalent. We fix an equivalence functor $\mathcal{F}:\;_{C}OMOD\to \;_{D}OMOD$ with inverse $G:\;_{D}OMOD\to \;_{\cl C}OMOD$ such that $$\mathcal{F}(\;_{\mathcal{A}}OMOD)=\;_{\mathcal{B}}OMOD\,\,,G(\;_{\mathcal{B}}OMOD)=\;_{\mathcal{A}}OMOD$$

Let $\mathcal{F}(\cl C)=\cl Y_0\,\,,G(\cl D)=\cl X_0$. By \cite{Ble} we have that $\cl Y_0$ is a TRO and $\cl X_0\cong \cl Y_0^{\star}$. Also, $\cl C\cong \cl X_0\otimes_{\cl D}^h \cl Y_0\,\,,\cl D\cong \cl Y_0\otimes_{C}^h \cl X_0.$ We also assume that $\mathcal{F}(\mathcal{A})=\cl Y\,,G(\mathcal{B})=\cl X$, and by \cite{Ble01-2} we get $$\mathcal{A}\cong \cl X\otimes_{\mathcal{B}}^h \cl Y\,\,,\mathcal{B}\cong \cl Y\otimes_{\mathcal{A}}^h \cl X.$$ 

From both the above papers, we have that $$\mathcal{F}(U)\cong \cl Y\otimes_{\mathcal{A}}^h U\,,\forall\,U\in\;_{\mathcal{A}}OMOD\,\,,\mathcal{F}(U)\cong \cl Y_0\otimes_{C}^h U\,,\forall\,U\in \;_{C}OMOD$$ and then we get $$\cl Y\otimes_{\mathcal{A}}^h U\cong \cl Y_0\otimes_{C}^h U\,,\forall\,U\in\;_{\mathcal{A}}OMOD.$$ Similarly, $\cl X\otimes_{\mathcal{B}}^h V\cong \cl X_0\otimes_{D}^h V\,,\forall\,V\in\;_{\mathcal{B}}OMOD.$

Now, we have that

\begin{align*}
\cl X_0\otimes_{D}^h \mathcal{B}\otimes_{D}^h \cl X_0^{\star}&\cong \cl X_0\otimes_{D}^h (\cl Y\otimes_{\mathcal{A}}^h \cl X)\otimes_{D}^h \cl Y_0\\&\cong (\cl X_0\otimes_{D}^h \cl Y)\otimes_{\mathcal{A}}^h (\cl X\otimes_{D}^h \cl Y_0)\\&\cong (\cl X\otimes_{\mathcal{B}}^h \cl Y)\otimes_{\mathcal{A}}^h (\cl X\otimes_{\mathcal{B}}^h \cl Y)\\&\cong \mathcal{A}\otimes_{\mathcal{A}}^h \mathcal{A}\\&\cong \mathcal{A}
\end{align*}

Similarly, $\cl X_0^{\star}\otimes_{C}^h \mathcal{A}\otimes_{C}^h \cl X_0\cong \mathcal{B}$. The following lemma implies that $\mathcal{A}\sim_{\Delta}\mathcal{B}$ and 
$\mathcal{C}\sim_{\Delta}\mathcal{D},$ where $\Delta$-equivalence is 
implemented in both cases by the same TRO. The proof of Theorem 1.1 is complete.

\end{proof}

\begin{lemma}

Suppose that $\mathcal{A}\,\,,\mathcal{B}$ are operator algebras and $D\subseteq \mathcal{B}$ be a $C^{\star}$ - algebra such that $\overline{[D\,\mathcal{B}]}=\overline{[\mathcal{B}\,D]}=\mathcal{B}$. Let $M\subseteq \mathbb{B}(K,H)$ be a TRO such that $\overline{[M^{\star}\,M]}\cong D$ (as $C^{\star}$ algebras) and assume that $\mathcal{A}\cong M\otimes_{D}^h \mathcal{B}\otimes_{D}^h M^{\star}.$ Then $\mathcal{A}\sim_{\Delta} \mathcal{B}$.

\end{lemma}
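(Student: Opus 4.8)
The plan is to realise the abstract isomorphism $\mathcal{A}\cong M\otimes_D^h\mathcal{B}\otimes_D^h M^{\star}$ concretely, on an interior (Rieffel) tensor product Hilbert space, and to read off from that realisation an honest TRO implementing the equivalence. First I would fix a completely isometric representation $\mathcal{B}\subseteq\mathbb{B}(L)$; since $D\subseteq\mathcal{B}$ is a $C^{\star}$-algebra with $\overline{[D\,\mathcal{B}]}=\overline{[\mathcal{B}\,D]}=\mathcal{B}$, a contractive approximate identity of $D$ is one for $\mathcal{B}$, so this representation may be taken nondegenerate and $D$ acts faithfully on $L$. Using the identification $\overline{[M^{\star}\,M]}\cong D$, I view $M$ as a right Hilbert $D$-module with $\langle m,n\rangle=m^{\star}n$ and form $H_M=M\otimes_D L$, whose inner product is $\langle m\otimes\xi,n\otimes\eta\rangle=\langle\xi,(m^{\star}n)\eta\rangle_L$.

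Next I would define $\Phi:M\to\mathbb{B}(L,H_M)$ by $\Phi(m)\xi=m\otimes\xi$ and set $N=\Phi(M)$. A direct computation gives $\Phi(n)^{\star}(p\otimes\eta)=(n^{\star}p)\eta$, whence $\Phi(m)\Phi(n)^{\star}\Phi(p)=\Phi(m\,n^{\star}p)$ (using the balancing relation $m\,n^{\star}p\otimes\xi=m\otimes(n^{\star}p)\xi$), so $\Phi$ is a TRO morphism, and faithfulness of $D$ on $L$ makes it a complete isometry; thus $N$ is a TRO with $\overline{[N^{\star}N]}=D$ acting on $L$. I would then note that $\overline{[N\,\mathcal{B}\,N^{\star}]}$ is an operator algebra on $H_M$ (closed under products since $N^{\star}N=D$ and $\mathcal{B}\,D\,\mathcal{B}\subseteq\mathcal{B}$) and that the multiplication map
$$\mu:M\otimes_D^h\mathcal{B}\otimes_D^h M^{\star}\to\overline{[N\,\mathcal{B}\,N^{\star}]},\qquad m\otimes b\otimes n^{\star}\mapsto\Phi(m)\,b\,\Phi(n)^{\star},$$
is a well-defined completely contractive homomorphism: the three constituent maps $\Phi$, $\mathrm{id}_{\mathcal{B}}$ and $n^{\star}\mapsto\Phi(n)^{\star}$ are completely contractive $D$-module maps, so $\mu$ factors through the balanced Haagerup tensor product.

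The main obstacle is to show that $\mu$ is a complete \emph{isometry}; complete contractivity is automatic, but the lower norm bound is the real content. I would establish it exactly as in the proof of Theorem~\ref{main}: choosing a net $m_{\lambda}\in\mathbb{M}_{n,1}(M^{\star})$ with $\|m_{\lambda}^{\star}\|\le1$ and $m_{\lambda}m_{\lambda}^{\star}\ell\to\ell$ for all $\ell\in M$, one inserts $m_{\lambda}m_{\lambda}^{\star}$ to reconstruct the Haagerup norm of a representing tensor from the operator norm of its image in $\mathbb{B}(H_M)$, invoking $\|y\otimes_D b\|\le\|y\|\,\|b\|$ and the matricial compatibility of the module Haagerup tensor product used there; alternatively this is precisely the concrete realisation of the module Haagerup tensor product over a TRO established in \cite{Ble,BleLeM04}. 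Granting this, $\alpha:=\mu\circ\iota$, where $\iota:\mathcal{A}\cong M\otimes_D^h\mathcal{B}\otimes_D^h M^{\star}$, is a completely isometric representation of $\mathcal{A}$ on $H_M$ with $\alpha(\mathcal{A})=\overline{[N\,\mathcal{B}\,N^{\star}]}$.

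Finally I would close the loop by computing
$$\overline{[N^{\star}\,\alpha(\mathcal{A})\,N]}=\overline{[(N^{\star}N)\,\mathcal{B}\,(N^{\star}N)]}=\overline{[D\,\mathcal{B}\,D]}=\mathcal{B},$$
the last equality following from $\overline{[D\,\mathcal{B}]}=\overline{[\mathcal{B}\,D]}=\mathcal{B}$ together with the approximate identity of $D$. Combined with $\alpha(\mathcal{A})=\overline{[N\,\mathcal{B}\,N^{\star}]}$, this shows $\alpha(\mathcal{A})\sim_{TRO}\mathcal{B}$ with implementing TRO $N^{\star}\subseteq\mathbb{B}(H_M,L)$ (so that $\alpha(\mathcal{A})=\overline{[(N^{\star})^{\star}\,\mathcal{B}\,N^{\star}]}$ and $\mathcal{B}=\overline{[N^{\star}\,\alpha(\mathcal{A})\,(N^{\star})^{\star}]}$), and hence $\mathcal{A}\sim_{\Delta}\mathcal{B}$.
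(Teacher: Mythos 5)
Your proposal is correct and follows essentially the same route as the paper: the paper likewise represents $\mathcal{B}$ on a Hilbert space $K$, forms the interior tensor product $M\otimes_D^h K$, realises $M$ as the concrete TRO $r(M)$ via $r_m(x)=m\otimes_D x$ (your $\Phi$ and $N$), and concludes with the identical computations $a(\mathcal{A})=\overline{[r(M)\,\beta(\mathcal{B})\,r(M)^{\star}]}$ and $\overline{[r(M)^{\star}\,a(\mathcal{A})\,r(M)]}=\beta(\mathcal{B})$. The only difference is at the step you flag as the main obstacle—the complete isometry $M\otimes_D^h\mathcal{B}\otimes_D^h M^{\star}\cong\overline{[N\,\mathcal{B}\,N^{\star}]}$—where the paper simply invokes Lemma 5.4 of \cite{Elekak} (applied twice) rather than rerunning the approximate-identity argument, which is a legitimate alternative justification.
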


\begin{proof}

We fix a completely isometric homomorphism $\beta:\mathcal{B}\to \mathbb{B}(K)$, and we have that $\beta|_{D}$ is also a $\star$-homomorphism. We can consider that the operator space $M\otimes_{D}^h K$ is a Hilbert space with the inner product given by $$\langle{n\otimes_{D}x,\ell\otimes_{D}y\rangle}:=\langle{\beta(\phi(\ell^{\star}\,n))(x),y\rangle}\,,n\,,\ell\in M\,,x\,,y\in K,$$ where $\phi:\overline{[M^{\star}\,M]}\to D$ is an isometric $\star$-isomorphism (\cite{BMP00}). Instead of $\phi(\ell^{\star}\,n)$, we may write $$\langle{n\otimes_{D}x,\ell\otimes_{D}y\rangle}:=\langle{\beta(\ell^{\star}\,n)(x),y\rangle}\,,n\,,\ell\in M\,,x\,,y\in K,$$
and for the action of $D$ on $K$, we denote $\ell^{\star}\,n\,x$ instead of $\phi(\ell^{\star}\,n)\,x$ where $\ell\,,n\in M\,,x\in K.$

For each $m\in M$, we define $r_m:K\to M\otimes_{D}^h K$ by $r_m(x)=m\otimes_{D}x$. 
Obviously, $r_m$ is a linear map and $r_m\in\mathbb{B}(K,M\otimes_{D}^h K)$.

We can easily see that 
$r_{m_1}\,r_{m_2}^{\star}\,r_{m_3}=r_{m_1\,m_2^{\star}\,m_3}\in r(M),$ therefore, $r(M)$ is a TRO. Also, with similar arguments, we have that $\beta(D)=\overline{[r(M)^{\star}\,r(M)]}$ (since $r(M)^{\star}\,r(M)=\beta(\phi(M^{\star}\,M))$). We also claim that $r$ is completely isometric. By Lemma 8.3.2 (Harris-Kaup) of \cite{BleLeM04}, it is sufficient to prove that $r$ is one-to-one. Indeed, for every $m\in M$ holds $$(r_m^{\star}\,r_m)(x)=r_m^{\star}(m\otimes_{D}x)=\beta(m^{\star}\,m)(x)\,,\forall\,x\in K$$ so $||r_m||^2=||r_m^{\star}\,r_m||=||\beta(m^{\star}\,m)||=||m^{\star}\,m||=||m||^2$, which means that $r$ is isometric and also one-to-one. Therefore, $M\cong r(M)$, and using Lemma 5.4 in \cite{Elekak}, 
we get 
 $$M\otimes_{D}^h \mathcal{B}\otimes_{D}^h M^{\star}\cong M\otimes_{D}^h \overline{[\beta(\mathcal{B})
 \,r(M)^{\star}]} \cong \overline{[r(M)\,\beta(\mathcal{B})\,r(M)^{\star}]}.$$ Therefore, there exists a completely isometric map such that $a(\mathcal{A})=\overline{[r(M)\,\beta(\mathcal{B})\,r(M)^{\star}]}\,(4)$, so
\begin{align*}
 \overline{[r(M)^{\star}\,a(\mathcal{A})\,r(M)]}&=\overline{[r(M)^{\star}\,r(M)\,\beta(\mathcal{B})\,r(M)^{\star}\,r(M)]}\\&=\overline{[\beta(D)\,\beta(\mathcal{B})\,\beta(D)]}\\&=\overline{[\beta(D\,\mathcal{B}\,D)]}\\&=\beta(\mathcal{B})\,(5)
\end{align*}

By $(4)\,,(5)$, we get $a(\mathcal{A})\sim_{TRO}\beta(\mathcal{B})\implies \mathcal{A}\sim_{\Delta}\mathcal{B}.$

\end{proof}

\begin{corollary}

The relation $\sim_{\Delta}$ is an equivalence relation for $\Delta$-pairs.

\end{corollary}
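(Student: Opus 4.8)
The plan is to deduce the corollary from Theorem \ref{main}: since that theorem identifies the relation of part $(i)$ — which is precisely $\sim_\Delta$ for $\Delta$-pairs — with the $\Delta$-Morita equivalence of part $(ii)$, it suffices to verify that the latter, categorical, relation is reflexive, symmetric and transitive. Working on the functorial side is much cleaner, because it sidesteps any choice of concrete representations and TROs.

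For reflexivity I would take $\cl F = G = Id_{\;_{C}OMOD}$ for a given $\Delta$-pair $(\cl A,C)$. The identity functor is completely contractive, the natural equivalences $G\circ\cl F\cong Id$ and $\cl F\circ G\cong Id$ hold trivially, and its restriction to $\;_{\cl A}OMOD$ is again the identity, so all four compatibility conditions of the definition are immediate. Symmetry is built into the definition itself: if $\cl F,G$ witness that $(\cl A,C)$ and $(\cl B,D)$ are $\Delta$-Morita equivalent, then the same pair of functors, with their roles interchanged, witnesses the reverse equivalence.

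The substance is in transitivity. Suppose $\cl F_1,G_1$ implement the $\Delta$-Morita equivalence of $(\cl A,C)$ with $(\cl B,D)$ and $\cl F_2,G_2$ implement that of $(\cl B,D)$ with $(\cl E,F)$. I would set $\cl F=\cl F_2\circ\cl F_1:\;_{C}OMOD\to\;_{F}OMOD$ and $G=G_1\circ G_2$, and check the three kinds of data. First, a composite of completely contractive functors is completely contractive, since on morphism spaces it is a composition of completely contractive maps. Second, the natural equivalences compose: using $G_2\circ\cl F_2\cong Id_{\;_{D}OMOD}$ together with naturality one obtains $G\circ\cl F=G_1\circ(G_2\circ\cl F_2)\circ\cl F_1\cong G_1\circ\cl F_1\cong Id_{\;_{C}OMOD}$, and symmetrically $\cl F\circ G\cong Id_{\;_{F}OMOD}$. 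Third, because $\cl F_1$ maps $\;_{\cl A}OMOD$ into $\;_{\cl B}OMOD$ and $\cl F_2$ maps $\;_{\cl B}OMOD$ into $\;_{\cl E}OMOD$, the restriction $\cl F|_{\;_{\cl A}OMOD}$ factors as $\cl F_2|_{\;_{\cl B}OMOD}\circ\cl F_1|_{\;_{\cl A}OMOD}$, and the same composition of natural equivalences yields the two subcategory conditions.

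The main obstacle I anticipate is bookkeeping rather than a conceptual hurdle: one must confirm that the four natural equivalences required by the definition survive composition \emph{simultaneously} on the ambient categories $\;_{C}OMOD,\;_{F}OMOD$ and on the subcategories $\;_{\cl A}OMOD,\;_{\cl B}OMOD,\;_{\cl E}OMOD$, which forces each functor to respect the inclusions of subcategories compatibly. I would also remark that the alternative route — proving the three properties directly from part $(i)$ using the TROs $C$, $M^{\star}$ and the product TRO $\overline{[N\,M]}$ — runs into exactly the difficulty the categorical argument avoids, namely that the two given equivalences realise the middle pair $(\cl B,D)$ on a priori different Hilbert spaces, so the two TROs need not compose until those representations have been aligned.
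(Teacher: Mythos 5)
Your proof is correct and takes essentially the same route as the paper: the corollary appears there without a written proof precisely because Theorem \ref{main} identifies $\sim_\Delta$ of $\Delta$-pairs with $\Delta$-Morita equivalence, which is an equivalence relation by exactly the identity-functor, role-swap, and functor-composition arguments you give (including the observation that the restriction conditions $\cl F(\;_{\cl A}OMOD)\subseteq\;_{\cl B}OMOD$, $G(\;_{\cl B}OMOD)\subseteq\;_{\cl A}OMOD$ make the composites and whiskered natural equivalences well defined). Your closing remark about the obstruction to composing the TROs directly is also apt, and is consistent with the paper's later use of this corollary (in the proof of its $\sigma\Delta$ theorem) through the categorical characterisation rather than through any TRO product.
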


\begin{remark}\em{ We consider that the $\Delta$-pairs $(\cl A, \cl C), (\cl B, \cl D)$ are equivalent 
in the sense of Theorem \ref{main}, and $\cl F$ is the functor defined in its proof.
For every $U_1\,,U_2\in\;_{\mathcal{A}}OMOD$, the map $\mathcal{F}:\;_{\mathcal{A}}CB(U_1,U_2)\to\;_{\mathcal{B}}CB(\mathcal{F}(U_1),\mathcal{F}(U_2))$ is a complete isometry.}
\end{remark}

\begin{proof}

For every $g\in\;_{\mathcal{B}}CB(\mathcal{F}(U_1),\mathcal{F}(U_2))$, we define $$\theta=f_{U_2}\circ g\circ f_{U_1}^{-1}\in\;_{\mathcal{A}}CB(U_1,U_2).$$

So, for every $f\in\;_{\mathcal{A}}CB(U_1,U_2)$, we have that $\mathcal{F}(f)\in\;_{\mathcal{B}}CB(\mathcal{F}(U_1),\mathcal{F}(U_2))$ and

\begin{align*}
 (\theta\circ G)(\mathcal{F}(f))&=\theta(G(\mathcal{F}(f))\\&=f_{U_2}\circ G\,\mathcal{F}(f)\circ f_{U_1}^{-1}\\&=f\circ f_{U_1}\circ f_{U_1}^{-1}\\&=f
\end{align*}

Since $\theta\circ G$ is completely contractive, we get $$||f||_{cb}=||(\theta\circ G)(\mathcal{F}(f)||_{cb}\leq ||\mathcal{F}(f)||_{cb}.$$

\end{proof}

\begin{theorem}

Let $(\mathcal{A},C)\,,(\mathcal{B},D)$ be $\sigma\,\Delta$-pairs. The following are equivalent:\\
$i)\,\mathcal{A}\sim_{\sigma\,\Delta}\mathcal{B}\,\,,C\sim_{\sigma\,\Delta}D$, where $\sigma\,\Delta$-equivalence is implemented in both cases by the same $\sigma$-TRO.\\
$ii)$ The pairs $(\mathcal{A},C)\,,(\mathcal{B},D)$ are $\Delta$-Morita equivalent.\\
$iii)$ There exists a completely isometric isomorphism $\phi:\mathcal{A}\otimes \cl K\to \mathcal{B}\otimes \cl K$ such that $\phi(C\otimes \cl K)=D\otimes \cl K$, where $\cl K$ is the algebra of compact operators of $\ell^2(\mathbb{N}).$

\end{theorem}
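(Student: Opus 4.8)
My plan is to establish the cycle $i)\Rightarrow iii)\Rightarrow ii)\Rightarrow i)$, together with the cheap implication $i)\Rightarrow ii)$, so that all three statements are tied together through Theorem~\ref{main} and the stable isomorphism theorem of \cite{Elest}. First I would dispose of the equivalence $i)\Leftrightarrow ii)$. Since a $\sigma$-TRO is in particular a TRO, condition $i)$ is a special case of condition $i)$ of Theorem~\ref{main}, so $i)\Rightarrow ii)$ is immediate. For the converse I would re-run the proof of $ii)\Rightarrow i)$ of Theorem~\ref{main}: that argument produces, via the preceding Lemma, a single TRO $M$ implementing both $\mathcal{A}\sim_{\Delta}\mathcal{B}$ and $C\sim_{\Delta}D$, and with $\overline{[M^{\star}M]}\cong D$ and $\overline{[MM^{\star}]}\cong C$. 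Because $(\mathcal{A},C)$ and $(\mathcal{B},D)$ are $\sigma\Delta$-pairs, the $C^{\star}$-algebras $C,D$ carry $\sigma$-units, hence so do $\overline{[M^{\star}M]}$ and $\overline{[MM^{\star}]}$; by the equivalent description of $\sigma$-TROs recalled in the introduction, $M$ is a $\sigma$-TRO. This upgrades the conclusion of Theorem~\ref{main} to $\sigma\Delta$-equivalences implemented by the same $\sigma$-TRO, which is exactly $i)$.

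The substantive content is the link with $iii)$. For $i)\Rightarrow iii)$ I would feed the $\sigma$-TRO $M$ satisfying $\mathcal{A}=\overline{[M^{\star}\,\mathcal{B}\,M]}$, $\mathcal{B}=\overline{[M\,\mathcal{A}\,M^{\star}]}$ into the stable isomorphism construction of \cite{Elest}, which yields a completely isometric isomorphism $\phi:\mathcal{A}\otimes\cl K\to\mathcal{B}\otimes\cl K$. The point specific to our setting is that the \emph{same} $M$, hence the same defining sequences $\{m_i\},\{n_j\}$, also satisfies $C=\overline{[M^{\star}\,D\,M]}$ and $D=\overline{[M\,C\,M^{\star}]}$; one then checks directly from the formula for $\phi$ that it carries $C\otimes\cl K$ into $D\otimes\cl K$ and, using $\phi^{-1}$ together with $C=\overline{[M^{\star}DM]}$, that $\phi(C\otimes\cl K)=D\otimes\cl K$.

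For $iii)\Rightarrow ii)$ I would route the argument through the already established equivalence $i)\Leftrightarrow ii)$ and through transitivity of $\Delta$-Morita equivalence (which follows from the Corollary together with Theorem~\ref{main}). The three steps are: first, the stabilisation $(\mathcal{A},C)$ is $\Delta$-Morita equivalent to $(\mathcal{A}\otimes\cl K,\,C\otimes\cl K)$, which I would prove by exhibiting the standard first-column $\sigma$-TRO $M_1$ realising $\mathcal{A}\sim_{\Delta}\mathcal{A}\otimes\cl K$ and, simultaneously, $C\sim_{\Delta}C\otimes\cl K$, and then invoking Theorem~\ref{main}; similarly $(\mathcal{B},D)$ is $\Delta$-Morita equivalent to $(\mathcal{B}\otimes\cl K,\,D\otimes\cl K)$. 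Second, the isomorphism $\phi$ of $iii)$ gives a $\Delta$-Morita equivalence between $(\mathcal{A}\otimes\cl K,\,C\otimes\cl K)$ and $(\mathcal{B}\otimes\cl K,\,D\otimes\cl K)$ at the purely categorical level: pullback of modules along $\phi^{-1}$ defines mutually inverse completely contractive functors between the module categories, and the hypothesis $\phi(C\otimes\cl K)=D\otimes\cl K$ guarantees that these functors match the distinguished subcategories. Composing the three equivalences by transitivity yields $ii)$, closing the cycle.

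The main obstacle I anticipate is the bookkeeping that ties the $C^{\star}$-subalgebras through the stable-isomorphism machinery: in $i)\Rightarrow iii)$ one must verify that the isomorphism $\phi$ extracted from $M$ respects the assignment $C\otimes\cl K\mapsto D\otimes\cl K$ exactly, and in the stabilisation step of $iii)\Rightarrow ii)$ one must check that the column $\sigma$-TRO implements the equivalence for the pair $(\mathcal{A},C)$ and not merely for $\mathcal{A}$. Everything else reduces either to a direct appeal to Theorem~\ref{main} and \cite{Elest}, or to the routine observations that $C\otimes\cl K$ and $D\otimes\cl K$ inherit $\sigma$-units and that isomorphic $\Delta$-pairs are $\Delta$-Morita equivalent.
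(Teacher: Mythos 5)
Your proposal is correct and follows essentially the same route as the paper: $i)\Leftrightarrow ii)$ via Theorem \ref{main} with the $\sigma$-TRO upgrade extracted from the $\sigma$-units of $C$ and $D$, the implication $i)\Rightarrow iii)$ via the stable isomorphism machinery of Eleftherakis, and the converse via the stabilisation pairs $(\mathcal{A}\otimes\cl K,\,C\otimes\cl K)$, $(\mathcal{B}\otimes\cl K,\,D\otimes\cl K)$ together with transitivity of $\Delta$-equivalence of pairs. The only cosmetic differences are that the paper obtains $i)\Rightarrow iii)$ by directly citing Theorem 3.2 of \cite{Ele14}, which already asserts the compatibility $\phi(C\otimes\cl K)=D\otimes\cl K$ that you propose to verify by hand from the formula in \cite{Elest}, and that the paper closes the cycle at $i)$ rather than at $ii)$, which is immaterial given the established equivalence $i)\Leftrightarrow ii)$.
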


\begin{proof}

$i)\iff ii)$ It is obvious according to the previous Theorem \ref{main}\\

$i)\implies iii)$. We may consider $\mathcal{A}=\overline{[M^{\star}\,\mathcal{B}\,M]}\,\,,\mathcal{B}=\overline{[M\,\mathcal{A}\,M^{\star}]}$ and also $C=\overline{[M^{\star}\,D\,M]}\,\,,D=\overline{[M\,C\,M^{\star}]}.$ Since $C\,,D$ have a $\sigma$-unit by Lemma 3.4 of \cite{Ele14}, $M$ is a $\sigma$-TRO. 
By Theorem 3.2 in the same article, there exists a completely isometric onto map $\phi:\mathcal{A}\otimes \cl K\to \mathcal{B}\otimes \cl K$ such that $\phi(C\otimes \cl K)=D\otimes \cl K.$\\

$iii)\implies i)$ We have that $(\mathcal{A},C)\sim_{\Delta}(\mathcal{A}\otimes \cl K,C\otimes \cl K)$, so $(\mathcal{A},C)\sim_{\Delta}(\mathcal{B}\otimes \cl K,D\otimes \cl K)$, but also, $(\mathcal{B},D)\sim_{\Delta} (\mathcal{B}\otimes \cl K,D\otimes \cl K)$. Since $\sim_{\Delta}$ is an equivalence relation for $\Delta$-pairs, we get $(\mathcal{A},C)\sim_{\Delta} (\mathcal{B},D).$

\end{proof}

In the rest of this section, we consider that the $\Delta$-pairs $(\cl A, \cl C), (\cl B, \cl D)$ are equivalent in the sense of Theorem \ref{main}, and $\cl F$ is the functor defined in its proof.

We consider the subcategory of representations of $\mathcal{A}$ denoted by $\;_{\mathcal{A}}HMOD$. If $H'\in\;_{\mathcal{A}}HMOD$, there exists a completely contractive morphism $\pi:\mathcal{A}\to \mathbb{B}(H')$ such that $\overline{\pi(\mathcal{A})(H')}=\overline{[\pi(a)(h)\in H': a\in\mathcal{A}\,,h\in H']}$. The space $\mathcal{F}(H')=M\otimes_{E}^h H'$ is also a Hilbert space. Its inner product is given by $$\langle{m\otimes_{E}\xi,\ell\otimes_{E}w\rangle}:=\langle{\pi(\ell^{\star}\,m)(\xi),w\rangle}_{H'}\,,m\,,\ell\in M\,,\xi\,,w\in H'$$

(for more details check \cite{Rie74, Rie82, BMP00}).

Also, the map $\mathcal{F}(\pi):\mathcal{A}\to \mathbb{B}(\mathcal{F}(H'))$ given by $$\mathcal{F}(\pi)(m\,b\,n^{\star})(\ell\otimes_{E}\xi)=m\otimes_{E}\pi(b\,n\,\ell^{\star})(\xi)$$ is completely contractive. 

We are going to prove that the functor $\mathcal{F}$ maintains the complete isometric representations. So, let $\pi:\mathcal{A}\to \mathbb{B}(H')$ be a homomorphism and also a complete isometry. We set $\rho=\mathcal{F}(\pi):\mathcal{B}\to \mathbb{B}(\mathcal{F}(H'))$, where $\mathcal{F}(H')=M\otimes_{E}^h H'$ and $$\rho(m\,a\,n^{\star})(\ell\otimes_{E}h)=m\otimes_{E}\pi(a\,n\,\ell^{\star})(h)\,,m\,,n\,,\ell\in M\,,a\in\mathcal{A}\,,h\in H'$$

We define the unitary operator $$U:G\,\mathcal{F}(H')\to H'\,,U(k^{\star}\otimes_{E'}(n\otimes_{E}h)):=\pi(k^{\star}\,n)(h),$$ and we consider $\phi=G(\rho):\mathcal{A}\to \mathbb{B}(G\,\mathcal{F}(H'))$ given by $$\phi(m^{\star}\,b\,n)(\ell^{\star}\otimes_{E'}x)=m^{\star}\otimes_{E'}\rho(b\,n\,\ell^{\star})(x)\,,m\,,n\,,\ell\in M\,,b\in\mathcal{B}\,,x\in \mathcal{F}(H')$$

\begin{lemma}

It holds that $U\,\phi(a)\,U^{\star}=\pi(a)\,,\forall\,a\in\mathcal{A}$.

\end{lemma}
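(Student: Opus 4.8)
The plan is to prove the equivalent statement $U\,\phi(a)=\pi(a)\,U$ for every $a\in\mathcal{A}$; since $U$ is unitary, this is the same as $U\,\phi(a)\,U^{\star}=\pi(a)$. As $\phi(a)$, $\pi(a)$ and $U$ are all bounded and $\mathcal{A}=\overline{[M^{\star}\,\mathcal{B}\,M]}$, it suffices by linearity and continuity to verify the intertwining relation for elements of the form $a=m^{\star}\,b\,n$ with $m,n\in M$, $b\in\mathcal{B}$, tested against the elementary tensors $v=k^{\star}\otimes_{E'}(p\otimes_{E}h)$, $k,p\in M$, $h\in H'$, which span a dense subspace of $G\mathcal{F}(H')=M^{\star}\otimes_{E'}^{h}(M\otimes_{E}^{h}H')$.

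First I would compute the right-hand side, which is unambiguous: by the defining formula of $U$ we have $U(v)=\pi(k^{\star}p)(h)$, and since $k^{\star}p\in M^{\star}M\subseteq E\subseteq\mathcal{A}$ and $\pi$ is a homomorphism, $\pi(a)\,U(v)=\pi(m^{\star}bn)\,\pi(k^{\star}p)(h)=\pi(m^{\star}\,b\,n\,k^{\star}p)(h)$.

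Next I would evaluate the left-hand side $U\,\phi(a)(v)$ by unwinding the three defining formulas in turn. Applying the formula for $\phi=G(\rho)$ pulls $m^{\star}$ outside the outer tensor and replaces the $\mathcal{A}$-factor by the $\mathcal{B}$-element $b\,n\,k^{\star}$ acting through $\rho$, giving $m^{\star}\otimes_{E'}\rho(b\,n\,k^{\star})(p\otimes_{E}h)$. Rewriting $b\,n\,k^{\star}\in\mathcal{B}=\overline{[M\,\mathcal{A}\,M^{\star}]}$ in the standard form to which the formula for $\rho=\mathcal{F}(\pi)$ applies, and using $M^{\star}M\subseteq E\subseteq\mathcal{A}$, collapses the inner $\otimes_{E}$ leg into $H'$ through $\pi$. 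A final application of the formula for $U$ collapses the remaining outer $\otimes_{E'}$ leg. Tracking all the $M$- and $M^{\star}$-factors and using the relations $M^{\star}M\subseteq E$, $MM^{\star}\subseteq E'$ and the TRO identity $M\,M^{\star}\,M\subseteq M$, the net effect is to gather every factor inside a single application of $\pi$, producing again $\pi(m^{\star}\,b\,n\,k^{\star}p)(h)$.

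The step I expect to be the main obstacle is precisely this bookkeeping across the two balanced Haagerup tensor products: at each stage one must move the appropriate $E$- or $E'$-elements across the $\otimes_{E}$ and $\otimes_{E'}$ signs and check that every intermediate product stays in the correct module (for instance $M\mathcal{A}\subseteq M$, $\mathcal{B}M\subseteq M$, $M^{\star}M\subseteq E$), so that the defining formulas for $\rho$, $\phi$ and $U$ genuinely apply at each stage. Once both computations are seen to yield the same vector on the dense set of elementary tensors, the boundedness of $U$, $\phi(a)$ and $\pi(a)$ together with the density of $[M^{\star}\mathcal{B}M]$ in $\mathcal{A}$ upgrade the identity $U\phi(a)(v)=\pi(a)U(v)$ to all vectors and all $a\in\mathcal{A}$, which is the desired relation $U\phi(a)U^{\star}=\pi(a)$.
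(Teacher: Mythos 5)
Your proposal is correct and takes essentially the same route as the paper: the paper likewise verifies the intertwining relation $U\,\phi(a)=\pi(a)\,U$ on elementary tensors $\ell^{\star}\otimes_{E'}(t\otimes_{E}h)$ for $a$ ranging over the dense subspace $[M^{\star}M\,\mathcal{A}\,M^{\star}M]$ of $\mathcal{A}$ --- writing $a=m^{\star}k\,a'\,s^{\star}n$, i.e.\ instantiating your $b$ directly as $k\,a'\,s^{\star}\in[M\,\mathcal{A}\,M^{\star}]$ so that the tensor formula for $\rho$ applies verbatim --- and then concludes by density and continuity exactly as you do. The bookkeeping you flag as the main obstacle works out just as you predict, both sides collapsing to $\pi(m^{\star}k\,a'\,s^{\star}n)\,\pi(\ell^{\star}t)(h)$.
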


\begin{proof}

For every $m\,,k\,,s\,,n\,,t\,,\ell\in M\,\,,a\in\mathcal{A}\,,h\in H'$, we have that 

\begin{align*}
 \phi(m^{\star}\,k\,a\,s^{\star}\,n)(\ell^{\star}\otimes_{E'}(t\otimes_{E}h))&=m^{\star}\otimes_{E'}\rho(k\,a\,s^{\star}\,n\,\ell^{\star})(t\otimes_{E}h)\\&=m^{\star}\otimes_{E'}k\otimes_{E}\pi(a\,s^{\star}\,n\,\ell^{\star}\,t)(h)
\end{align*}

Therefore,

\begin{align*}
 U(\phi(m^{\star}\,k\,a\,s^{\star}\,n)(\ell^{\star}\otimes_{E'}(t\otimes_{E} h)))&=U(m^{\star}\otimes_{E'}k\otimes_{E}\pi(a\,s^{\star}\,n\,\ell^{\star}\,t)(h))\\&=\pi(m^{\star}\,k)(\pi(a\,s^{\star}\,n\,\ell^{\star}\,t)(h))\\&=\pi(m^{\star}\,k\,a\,s^{\star}\,n)\,\pi(\ell^{\star}\,t)(h)\\&=\pi(m^{\star}\,k\,a\,s^{\star}\,n)\,U(\ell^{\star}\otimes_{E'}(t\otimes_{E}h))
\end{align*}

So, $U(\phi(m^{\star}\,k\,a\,s^{\star}\,n))=\pi(m^{\star}\,k\,a\,s^{\star}\,n)\,U$, but since $\mathcal{A}=\overline{[M^{\star}\,M\,\mathcal{A}\,M^{\star}\,M]}$, we get that $U\,\phi(a)\,U^{\star}=\pi(a).$

\end{proof}

We conclude that since $\pi$ is an isometry and $U\,\phi(a)\,U^{\star}=\pi(a)\,,a\in\mathcal{A}$, where $U$ is unitary, $\phi$ is also an isometry.
Observe now that if $(m_i)_{i\in I}$ is a net of $\mathbb{M}_{n_i,1}(M^{\star})$ such that $||m_i||\leq 1\,\forall\,i\in I$ and also $m_i\,m_i^{\star}\,m\to m\,,\forall\,m\in M$, then we have that 

\begin{align*}
 \phi(m_i^{\star}\,b\,m_i)(\ell^{\star}\otimes_{E'}x)&=m_i^{\star}\otimes_{E'}\rho(b\,m_i\,\ell^{\star})(x)\\&=m_i^{\star}\otimes_{E'}\rho(b)\,V(m_i\otimes_{E}(\ell^{\star}\otimes_{E'}x))
\end{align*}
for every $b\in\mathcal{B}\,,x\in\mathcal{F}(H')\,,\ell\in M$ where $V$ is the unitary operator $$V:M\otimes_{E}^h (M^{\star}\otimes_{E'}^h K')\to K'\,\,,K'=\mathcal{F}(H').$$

Therefore, for all $w\in \overline{M^{\star}\otimes K'}$ holds $$\phi(m_i^{\star}\,b\,m_i)(w)=m_i\otimes_{E}\rho(b)\,V(m_i\otimes_{E}w).$$ So, $$||\phi(m_i^{\star}\,b\,m_i)(w)||\leq ||m_i||\,||\rho(b)||\,||m_i^{\star}||\,||w||\leq ||\rho(b)||\,||w||,$$ which means that $||\phi(m_i^{\star}\,b\,m_i)||\leq ||\rho(b)||$, but $\phi$ is an isometry, and we conclude that $||m_i^{\star}\,b\,m_i||\leq ||\rho(b)||\,,\forall\,b\in\mathcal{B}\,(3).$

Since $\lim_{i}m_i\,m_i^{\star}\,b\,m_i\,m_i^{\star}=b$, we have that $$\sup_{i\in I}||m_i\,m_i^{\star}\,b\,\,m_im_i^{\star}||=||b||.$$ On the other hand, $||m_i\,m_i^{\star}\,b\,m_i\,m_i^{\star}||\leq ||m_i^{\star}\,b\,m_i||\leq ||b||$, therefore, $$\sup_{i\in I}||m_i^{\star}\,b\,m_i||=||b||\,,\forall\,b\in\mathcal{B}.$$

We conclude from $(3)$ that $||b||\leq ||\rho(b)||\,,\forall\,b\in\mathcal{B}$, but also that $\rho$ is completely contractive and therefore $||\rho(b)||=||b||\,,\forall\,b\in\mathcal{B}$, so $\rho$ is an isometry. Similarly, $\rho$ is a complete isometry.
Using the above facts, we can prove that $\cl F$ restricts to an equivalence functor from $\;_{\cl A}HMOD$ to $\;_{\cl B}HMOD.$ This functor maps completely isometric representations to completely isometric representations.

\section{$\Delta$-Morita equivalence of operator spaces}

\begin{definition}

Let $\cl X\subseteq \mathbb{B}(H_1,H_2)\,,\cl Y\subseteq \mathbb{B}(K_1,K_2)$ be operator spaces. We call them TRO-equivalent (resp. $\sigma$-TRO equivalent) if there exist TROs (resp. $\sigma$-TROs) $M_i\subseteq \mathbb{B}(H_i,K_i)\,,i=1,2$ such that $$\cl X=\overline{[M_2^{\star}\,\cl Y\,M_1]}\,\,,\cl Y=\overline{[M_2\,\cl X\,M_1^{\star}]}$$
We write $\cl X\sim_{TRO} \cl Y$, resp. $\cl X\sim_{\sigma TRO}\cl Y$.
\end{definition}

\begin{definition}
Let $\cl X\,,\cl Y$ be operator spaces. We call them $\Delta$-equivalent (resp. $\sigma\,Delta$-equivalent) if there exist completely isometric maps $\phi:\cl X\to \mathbb{B}(H_1,H_2)\,,\psi:\cl Y\to \mathbb{B}(K_1,K_2)$ such that $\phi(\cl X)\sim_{TRO}\psi(\cl Y)$ (resp. $\phi(\cl X)\sim_{\sigma TRO}\psi(\cl Y).$ We write $\cl X\sim_{\Delta} \cl Y$, resp. $\cl X\sim_{\sigma\,\Delta} \cl Y.$ 
\end{definition}

\begin{definition}
Let $\cl X$ be an operator space and $D_1\,,D_2$ be $C^{\star}$-algebras (resp. $\sigma$ unital $C^{\star}$-algebras) such that $$\cl X=\overline{[D_1\,\cl X]}=\overline{[D_2\,\cl X]}$$ Then, the 
space $$\mathcal{A}_{\cl X}=\begin{pmatrix}D_2 & \cl X\\
0 & D_1\end{pmatrix}$$ is an operator algebra, which we call an algebraic $\Delta$-extension of $X$ (resp. $\sigma\,\Delta$-extension of $\cl X$).
\end{definition}

\begin{definition}
Let $\cl X\,,\cl Y$ be operator spaces. We call them $\Delta$-Morita equivalent (resp. $\sigma\,\Delta$-Morita equivalent) if they have algebraic $\Delta$-extensions (resp. $\sigma\,\Delta$-extensions) $\mathcal{A}_{\cl X}\,,\mathcal{A}_{\cl Y}$ such that the $\Delta$-pairs $(\mathcal{A}_{\cl X},\Delta(\mathcal{A}_{\cl X}))\,,(\mathcal{A}_{\cl Y},\Delta(\mathcal{A}_{\cl Y}))$ to be $\Delta$-Morita equivalent.
\end{definition}

\begin{lemma}

\label{lem1} 
The TRO-equivalence (resp. $\sigma$-TRO) of operator spaces is an equivalence relation.

\end{lemma}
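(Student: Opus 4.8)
The plan is to verify the three defining properties of an equivalence relation, treating the TRO and $\sigma$-TRO versions in parallel, since every construction below preserves the $\sigma$-condition.

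\emph{Reflexivity and symmetry.} For $\cl X\subseteq\mathbb{B}(H_1,H_2)$ I would implement $\cl X\sim_{TRO}\cl X$ by the one-dimensional TROs $M_i=\mathbb{C}\,\id_{H_i}$, which are $\sigma$-TROs because $\overline{[M_i^\star M_i]}=\overline{[M_iM_i^\star]}=\mathbb{C}\,\id_{H_i}$ is unital; here $\overline{[M_2^\star\cl XM_1]}=\cl X=\overline{[M_2\cl XM_1^\star]}$ trivially. For symmetry, if $\cl X\sim_{TRO}\cl Y$ is implemented by $M_i\subseteq\mathbb{B}(H_i,K_i)$, I would use the adjoints $M_i^\star\subseteq\mathbb{B}(K_i,H_i)$: taking $\star$ of $M_iM_i^\star M_i\subseteq M_i$ shows each $M_i^\star$ is a TRO (a $\sigma$-TRO when $M_i$ is, since the two associated $C^\star$-algebras are merely interchanged), and the identities $\overline{[(M_2^\star)^\star\cl X(M_1^\star)]}=\overline{[M_2\cl XM_1^\star]}=\cl Y$ and $\overline{[M_2^\star\cl YM_1]}=\cl X$ give $\cl Y\sim_{TRO}\cl X$.

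\emph{Transitivity.} Suppose $\cl X\sim_{TRO}\cl Y$ via $M_i\subseteq\mathbb{B}(H_i,K_i)$ and $\cl Y\sim_{TRO}\cl Z$ via $N_i\subseteq\mathbb{B}(K_i,L_i)$. The obvious candidate is $P_i=\overline{[N_iM_i]}\subseteq\mathbb{B}(H_i,L_i)$: substituting the given relations and using $\overline{[A\,\overline{[BC]}\,D]}=\overline{[ABCD]}$ one checks the two linking identities $\cl X=\overline{[P_2^\star\cl ZP_1]}$ and $\cl Z=\overline{[P_2\cl XP_1^\star]}$ without real effort. The main obstacle, and where all the work lies, is that $P_i$ need not be a TRO: the ternary product $\overline{[N_iM_iM_i^\star N_i^\star N_iM_i]}$ forces one to multiply the $C^\star$-algebras $\overline{[M_iM_i^\star]}$ and $\overline{[N_i^\star N_i]}$ inside $\mathbb{B}(K_i)$, and their product need not fall back into $N_iM_i$. (One can in fact exhibit TROs whose spatial product is a one-dimensional space $\mathbb{C}a$ with $aa^\star a\notin\mathbb{C}a$.)

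To overcome this I would not compose $M_i,N_i$ directly but pass to the $C^\star$-algebra $\mathcal{C}_i$ generated by $M_i\cup N_i$ inside $\mathbb{B}(H_i\oplus K_i\oplus L_i)$, with $M_i$ in the $(K_i,H_i)$-corner and $N_i$ in the $(L_i,K_i)$-corner. Let $\tilde M_i,\tilde N_i,\tilde P_i$ be the $(K_i,H_i)$-, $(L_i,K_i)$- and $(L_i,H_i)$-corners of $\mathcal{C}_i$; each is a TRO, being a corner of a $C^\star$-algebra, and $\tilde P_i=\overline{[\tilde N_i\tilde M_i]}$ because every word from $H_i$ to $L_i$ factors through $K_i$. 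Now the ternary product closes automatically: $\tilde M_i\tilde M_i^\star$ and $\tilde N_i^\star\tilde N_i$ lie in the \emph{same} $(K_i,K_i)$-corner $\mathcal{C}_i^{KK}$, so their product stays there and $\overline{[\tilde N_i\,\mathcal{C}_i^{KK}\,\tilde M_i]}\subseteq\overline{[\tilde N_i\tilde M_i]}=\tilde P_i$. Finally I must see that replacing $M_i$ by $\tilde M_i$ and $N_i$ by $\tilde N_i$ does not destroy the original equivalences; writing $\tilde M_i=\overline{[\mathcal{C}_i^{KK}M_i]}$, this reduces to the mixed non-degeneracy $\overline{[\mathcal{C}_2^{KK}\,\cl Y\,\mathcal{C}_1^{KK}]}=\cl Y$, which itself follows from $\overline{[\,\overline{[N_2^\star N_2]}\,\cl Y]}=\overline{[\,\overline{[M_2M_2^\star]}\,\cl Y]}=\cl Y$ and their right-handed analogues, all immediate consequences of the two hypotheses. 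Feeding these identities back shows $\tilde P_i$ implements $\cl X\sim_{TRO}\cl Z$. In the $\sigma$ case the one remaining point I would verify is that the $\sigma$-units of the $C^\star$-algebras attached to $M_i$ and $N_i$ assemble into $\sigma$-units for those attached to $\tilde P_i$, so that $\tilde P_i$ is a $\sigma$-TRO.
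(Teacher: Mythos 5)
Your reflexivity and symmetry arguments are fine, and your transitivity argument for the plain TRO case is essentially correct: the paper itself gives no argument here (it defers entirely to [Elekak], with the $\sigma$-case dismissed as ``similar''), so you are supplying a proof the paper omits, and your corner construction works. One point you use silently but which does hold: since $\mathcal{C}_i$ is generated by subspaces living in fixed corners of $H_i\oplus K_i\oplus L_i$, it is the closed span of words each lying in a single corner, so the coordinate compressions map $\mathcal{C}_i$ into itself; this is what makes each corner a TRO and gives $\tilde M_i=\overline{[\mathcal{C}_i^{KK}M_i]}$ (together with $M_i=\overline{[M_iM_i^\star M_i]}$), $\tilde P_i=\overline{[\tilde N_i\tilde M_i]}=\overline{[N_i\,\mathcal{C}_i^{KK}M_i]}$. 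Your mixed nondegeneracy $\overline{[\mathcal{C}_2^{KK}\,\cl Y\,\mathcal{C}_1^{KK}]}=\cl Y$ also checks out: the one-sided relations you list are immediate, elements of $\mathcal{C}_i^{KK}$ are limits of sums of alternating words in $\overline{[M_iM_i^\star]}$ and $\overline{[N_i^\star N_i]}$, and each such word multiplies $\cl Y$ back into $\cl Y$. Feeding this into $\overline{[\tilde P_2\,\cl X\,\tilde P_1^\star]}$ and $\overline{[\tilde P_2^\star\,\cl Z\,\tilde P_1]}$ does yield $\cl Z$ and $\cl X$ as you say.

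The genuine gap is the $\sigma$-TRO case, which you declare at the outset (``every construction below preserves the $\sigma$-condition'') and then reduce to a ``remaining point to verify''. That point is not a routine assembly of $\sigma$-units, and as your construction stands it is not clear it is even true. The corner algebras attached to your composed TRO are enlargements such as $\overline{[\tilde P_i^\star\tilde P_i]}=\overline{[M_i^\star\,J_i\,M_i]}$, where $J_i=\overline{[\mathcal{C}_i^{KK}\,\overline{[N_i^\star N_i]}\,\mathcal{C}_i^{KK}]}$ is the closed ideal of $\mathcal{C}_i^{KK}$ generated by $\overline{[N_i^\star N_i]}$ (already $\overline{[\tilde M_i^\star\tilde M_i]}=\overline{[M_i^\star\,\mathcal{C}_i^{KK}\,M_i]}$ is strictly larger than $\overline{[M_i^\star M_i]}$ in general). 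Sigma-unitality does not survive this kind of enlargement: for an uncountable discrete group $\Gamma$, take $p$ the rank-one projection onto $\delta_e$ in $\ell^2(\Gamma)$ and $\mathcal{C}=C^\star\bigl(\mathbb{C}p\cup C^\star_r(\Gamma)\bigr)\subseteq\mathbb{B}(\ell^2(\Gamma))$; both generating algebras are unital, hence $\sigma$-unital, yet the ideal $\overline{[\mathcal{C}\,p\,\mathcal{C}]}=\mathcal{K}(\ell^2(\Gamma))$ has no strictly positive element. (The C*-algebra $\mathcal{C}_i^{KK}$ itself generated by two $\sigma$-unital subalgebras \emph{is} $\sigma$-unital, by a Cauchy--Schwarz argument on states, but that is not what you need: your candidate $\sigma$-units must carry the interior copy of $\overline{[M_iM_i^\star]}$, and the naive candidates $\sum_k 2^{-k}x_k a x_k^\star$ fail exactly because the letters of $\mathcal{C}_i^{KK}$ adjacent to the $N_i$-factors need not lie in $\overline{[M_iM_i^\star]}$.) To close the gap you would have to either prove $\sigma$-unitality of these corners using the standing nondegeneracy relations over $\cl X,\cl Y,\cl Z$ --- a substantive argument, not an assembly --- or compose the equivalences through a TRO built with control of the $\sigma$-units from the start, which is in effect what the argument cited by the paper does.
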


\begin{proof}The fact TRO-equivalence is an equivalence relation has been proved in 
\cite{Elekak}. The proof that $\sigma$-TRO-equivalence is an equivalence relation is similar.
\end{proof}

\begin{theorem}

\label{forunitops}

Let $\cl X\,,\cl Y$ be operator spaces. The following are equivalent:\\
$i)\,\,\cl X\sim_{\Delta}\cl Y$\\
$ii)$ $\cl X$ and $\cl Y$ are $\Delta$-Morita equivalent.

\end{theorem}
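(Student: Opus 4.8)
The plan is to route both implications through Theorem~\ref{main}: by that theorem the $\Delta$-pairs $(\cl A_{\cl X},\Delta(\cl A_{\cl X}))$ and $(\cl A_{\cl Y},\Delta(\cl A_{\cl Y}))$ are $\Delta$-Morita equivalent exactly when $\cl A_{\cl X}\sim_\Delta\cl A_{\cl Y}$ and $\Delta(\cl A_{\cl X})\sim_\Delta\Delta(\cl A_{\cl Y})$ with a single TRO implementing both. So in each direction I translate between a TRO-equivalence of the spaces $\cl X,\cl Y$ and a diagonal-respecting TRO-equivalence of their triangular extensions. Throughout I use that for an extension $\cl A_{\cl X}=\left(\begin{smallmatrix}D_2&\cl X\\0&D_1\end{smallmatrix}\right)$ the diagonal is the block $C^\star$-algebra $\Delta(\cl A_{\cl X})=D_2\oplus D_1$, that $\cl X$ sits completely isometrically as the corner $e_2\cl A_{\cl X}e_1$ for the central projections $e_2,e_1$ of $\Delta(\cl A_{\cl X})$, and that $(\cl A_{\cl X},\Delta(\cl A_{\cl X}))$ is genuinely a $\Delta$-pair.

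For $i)\Rightarrow ii)$ I may assume $\cl X\subseteq\bb B(H_1,H_2)$, $\cl Y\subseteq\bb B(K_1,K_2)$ with $\cl X=\overline{[M_2^\star\cl Y M_1]}$ and $\cl Y=\overline{[M_2\cl X M_1^\star]}$ for TROs $M_i\subseteq\bb B(H_i,K_i)$. Put $Q_i=\overline{[M_i^\star M_i]}$ and $P_i=\overline{[M_i M_i^\star]}$; using the TRO identities $\overline{[M_i M_i^\star M_i]}=M_i$ one checks $\overline{[Q_2\cl X]}=\overline{[\cl X Q_1]}=\cl X$ and $\overline{[P_2\cl Y]}=\overline{[\cl Y P_1]}=\cl Y$, so that $\cl A_{\cl X}=\left(\begin{smallmatrix}Q_2&\cl X\\0&Q_1\end{smallmatrix}\right)$ and $\cl A_{\cl Y}=\left(\begin{smallmatrix}P_2&\cl Y\\0&P_1\end{smallmatrix}\right)$ are algebraic $\Delta$-extensions. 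I then take the single TRO $N=\left(\begin{smallmatrix}M_2&0\\0&M_1\end{smallmatrix}\right)$ and compute $\overline{[N^\star\cl A_{\cl Y}N]}$ entry by entry: the corner gives $\overline{[M_2^\star\cl Y M_1]}=\cl X$, while the diagonal entries give $\overline{[M_i^\star P_i M_i]}=\overline{[M_i^\star M_i M_i^\star M_i]}=Q_i$, whence $\overline{[N^\star\cl A_{\cl Y}N]}=\cl A_{\cl X}$ and, symmetrically, $\overline{[N\cl A_{\cl X}N^\star]}=\cl A_{\cl Y}$. The same computation restricted to the diagonals yields $\overline{[N^\star\Delta(\cl A_{\cl Y})N]}=\Delta(\cl A_{\cl X})$ and its mate. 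Thus $N$ implements both equivalences and Theorem~\ref{main} delivers $ii)$.

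For $ii)\Rightarrow i)$ I apply the implication $ii)\Rightarrow i)$ of Theorem~\ref{main} to the given extensions: there are completely isometric representations and one TRO $N$ with $\cl A_{\cl X}=\overline{[N^\star\cl A_{\cl Y}N]}$, $\cl A_{\cl Y}=\overline{[N\cl A_{\cl X}N^\star]}$ and the same two relations for the diagonals $\Delta(\cl A_{\cl X})=D_2\oplus D_1$, $\Delta(\cl A_{\cl Y})=F_2\oplus F_1$. Let $e_2,e_1$ and $f_2,f_1$ be the central projections of these diagonals, so that $\cl X=e_2\cl A_{\cl X}e_1$ and $\cl Y=f_2\cl A_{\cl Y}f_1$. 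I set $M_2=\overline{[f_2 N e_2]}$ and $M_1=\overline{[f_1 N e_1]}$; compressing the triple product of $N$ shows these are TROs. Compressing $\cl A_{\cl Y}=\overline{[N\cl A_{\cl X}N^\star]}$ by $f_2$ on the left and $f_1$ on the right, and splitting $\cl A_{\cl X}$ into its corners $e_2\cl A_{\cl X}e_2=D_2$, $e_2\cl A_{\cl X}e_1=\cl X$, $e_1\cl A_{\cl X}e_1=D_1$ (the $(2,1)$ corner being $0$), gives
\begin{equation*}
\cl Y=\overline{[f_2 N D_2 N^\star f_1]}+\overline{[f_2 N \cl X N^\star f_1]}+\overline{[f_2 N D_1 N^\star f_1]}.
\end{equation*}
Since $D_1,D_2\subseteq\Delta(\cl A_{\cl X})$ and $N\Delta(\cl A_{\cl X})N^\star\subseteq\Delta(\cl A_{\cl Y})$, which is block-diagonal for $f_2,f_1$, both outer terms lie in $f_2\Delta(\cl A_{\cl Y})f_1=0$ and vanish, leaving $\cl Y=\overline{[f_2 N\cl X N^\star f_1]}=\overline{[M_2\cl X M_1^\star]}$. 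The symmetric compression of $\cl A_{\cl X}=\overline{[N^\star\cl A_{\cl Y}N]}$ by $e_2,e_1$ gives $\cl X=\overline{[M_2^\star\cl Y M_1]}$, so $\cl X\sim_{TRO}\cl Y$ and hence $\cl X\sim_\Delta\cl Y$.

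The main obstacle is exactly this $ii)\Rightarrow i)$ extraction: recovering two TROs $M_1,M_2$ from the single TRO $N$ that Theorem~\ref{main} produces. What makes it work is the hypothesis that one and the same TRO implements both the algebra and the diagonal equivalences; this forces $N\Delta(\cl A_{\cl X})N^\star\subseteq\Delta(\cl A_{\cl Y})$, so that the unwanted contributions $N D_iN^\star$ land in the block-diagonal diagonal algebra and are annihilated by the off-diagonal compression $f_2(\cdot)f_1$. Without the diagonal part of the equivalence these terms could not be discarded, and the corner of $N\cl A_{\cl X}N^\star$ would carry extra summands beyond $\cl Y$.
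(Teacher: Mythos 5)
Your direction $i)\Rightarrow ii)$ is essentially the paper's own argument: the same triangular extensions built from $Q_i=\overline{[M_i^\star M_i]}$, $P_i=\overline{[M_iM_i^\star]}$, the same diagonal TRO $N=\left(\begin{smallmatrix}M_2&0\\0&M_1\end{smallmatrix}\right)$, and the appeal to Theorem~\ref{main}. In $ii)\Rightarrow i)$ your overall plan (compress by the corner supports and use that the diagonal relations force the cross terms $ND_iN^\star$ into the block-diagonal algebra, where the off-diagonal compression kills them) is also the paper's plan, and your cross-term computation is sound, since it only uses the elementwise containments $N\Delta(\cl A_{\cl X})N^\star\subseteq\Delta(\cl A_{\cl Y})$ and $f_2\Delta(\cl A_{\cl Y})f_1=0$. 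Your shortcut at the end --- identifying $\cl X,\cl Y$ completely isometrically with the corners so that the corner TRO-equivalence directly gives $\cl X\sim_\Delta\cl Y$ --- is legitimate and avoids the paper's extra step of relating $\cl X$ to $\left(\begin{smallmatrix}0&\cl X\\0&0\end{smallmatrix}\right)$ via scalar TROs and invoking transitivity (Lemma~\ref{lem1}).

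The genuine gap is your claim that $M_2=\overline{[f_2Ne_2]}$ and $M_1=\overline{[f_1Ne_1]}$ are TROs because ``compressing the triple product of $N$ shows'' it. From the four span relations in the statement of Theorem~\ref{main} alone, this does not follow: $M_2M_2^\star M_2=\overline{[f_2Ne_2N^\star f_2Ne_2]}$, and nothing controls the middle factor, since $N^\star N$ is completely unconstrained by the hypotheses (only $N^\star\Delta(\cl A_{\cl Y})N$ and $N\Delta(\cl A_{\cl X})N^\star$ are), and the projections $e_2,f_2$ are not elements of the algebras --- if $D_2,F_2$ are non-unital they are merely strong limits of images of approximate units, so a product such as $Ne_2N^\star$ lands only in the strong closure of $\Delta(\cl A_{\cl Y})$, which is useless for the norm-closed span computations you need. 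This is precisely why the paper inserts a normalization step you omit: it replaces $N$ by $M=\overline{[\Delta(\cl A_{\cl Y})N]}=\overline{[N\Delta(\cl A_{\cl X})]}$ (checking this equality and that $M$ is again a TRO implementing all four relations) and proves the additional identities $\overline{[M^\star M]}=\Delta(\cl A_{\cl X})$, $\overline{[MM^\star]}=\Delta(\cl A_{\cl Y})$. With $M$ so normalized one has $\overline{[Me_2]}=\overline{[M\alpha(D_2)]}$ and $\overline{[f_2M]}=\overline{[\beta(F_2)M]}$, so your compressed spaces become honest norm-closed spans $\overline{[\beta(F_2)M\alpha(D_2)]}$ (the paper's corner TROs), and the TRO property is then a routine computation using $\overline{[M^\star M]}=\Delta(\cl A_{\cl X})$. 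Your argument becomes correct once this normalization of $N$ is inserted before cutting corners; without it, the key step is unjustified.
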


\begin{proof}
$i)\implies ii)$ We may assume that $\cl X=\overline{[M_2^{\star}\,\cl Y\,M_1]}\,\,,\cl Y=\overline{[M_2\,\cl X\,M_1^{\star}]}$ for TROs $M_1\subseteq \mathbb{B}(H_1,K_1)$ and $M_2\subseteq \mathbb{B}(H_2,K_2).$ If we consider the $C^{\star}$-algebras $$D_1=\overline{[M_1^{\star}\,M_1]}\,,D_2=\overline{[M_2^{\star}\,M_2]}\,,E_1=\overline{[M_1\,M_1^{\star}]}\,,E_2=\overline{[M_2\,M_2^{\star}]},$$  we get $$\cl X=\overline{[D_2\,\cl X]}=\overline{[\cl X\,D_1]}\,\,,\cl Y=\overline{[E_2\,\cl Y]}=\overline{[\cl Y\,E_1]}.$$ So, the operator algebras $$\mathcal{A}_{\cl X}=\begin{pmatrix}\cl D_2 & \cl X\\

0 & \cl D_1\end{pmatrix}\subseteq \mathbb{B}(H_2\oplus H_1)\,\,,\mathcal{A}_{\cl Y}=\begin{pmatrix}E_2 & \cl Y\\

0 & E_1 \end{pmatrix}\subseteq \mathbb{B}(K_2\oplus K_1)$$ are $\Delta$-algebraic extensions of $\cl X\,,\cl Y$, respectively, such that $$\Delta(\mathcal{A}_{\cl X})=\begin{pmatrix}D_2 & 0\\

0 & D_1\end{pmatrix}\,,\Delta(\mathcal{A}_{\cl Y})=\begin{pmatrix}E_2 & 0\\

0 & E_1\end{pmatrix}$$

Clearly $$M=\begin{pmatrix}M_2 & 0\\

0 & M_1\end{pmatrix}\subseteq \mathbb{B}(H_2\oplus H_1,K_2\oplus K_1)$$ is a TRO.

Furthermore, $\overline{[M^{\star}\,\mathcal{A}_{\cl Y}\,M]}=\mathcal{A}_{\cl X}$
and $$\mathcal{A}_{\cl Y}=\overline{[M\,\mathcal{A}_{\cl X}\,M^{\star}]}\,\,,\Delta(\mathcal{A}_{\cl X})=\overline{[M^{\star}\,\Delta(\mathcal{A}_{\cl Y})\,M]}\,\,,\Delta(\mathcal{A}_{\cl Y})=\overline{[M\,\Delta(\mathcal{A}_{\cl X})\,M^{\star}]},$$ 
so the pairs $(\mathcal{A}_{\cl X},\Delta(\mathcal{A}_{\cl X}))\,\,,(\mathcal{A}_{\cl Y},\Delta(\mathcal{A}_{\cl Y}))$ are $\Delta$-Morita equivalent. That is, $\cl X$ and $\cl Y$ are $\Delta$-Morita equivalent.\\

$ii)\implies i)$ Suppose that $\cl X$ and $\cl Y$ are $\Delta$-Morita equivalent. There exist $C^{\star}$-algebras $D_i\,,E_i\,,i=1,2$ such that $\cl X=\overline{[D_2\,\cl X]}=\overline{[\cl X\,D_1]}\,\,,\cl Y=\overline{[E_2\,\cl Y]}=\overline{[\cl Y\,E_1]}$ and the pairs $(\mathcal{A}_{\cl X},\Delta(\mathcal{A}_{\cl X}))\,,(\mathcal{A}_{\cl Y},\Delta(\mathcal{A}_{\cl Y}))$ are $\Delta$-Morita equivalent, where $$\mathcal{A}_{\cl X}=\begin{pmatrix}D_2 & \cl X\\

0 & D_1\end{pmatrix}\,\,,\mathcal{A}_{\cl Y}=\begin{pmatrix}E_2 & \cl Y\\

0 & E_1\end{pmatrix}$$ so $$\Delta(\mathcal{A}_{\cl X})=\begin{pmatrix}D_2 & 0\\

0 & D_1\end{pmatrix}\,\,,\Delta(\mathcal{A}_{\cl Y})=\begin{pmatrix}E_2 & 0\\

0 & E_1\end{pmatrix}$$

Let $N$ be a TRO such that $$\mathcal{A}_{\cl X}=\overline{[N^{\star}\,\mathcal{A}_{\cl Y}\,N]}\,,\mathcal{A}_{\cl Y}=\overline{[N\,\mathcal{A}_{\cl X}\,N^{\star}]}\,,\Delta(\mathcal{A}_{\cl X})=\overline{[N^{\star}\,\Delta(\mathcal{A}_{\cl Y})\,N]}\,,\Delta(\mathcal{A}_{\cl Y})=\overline{[N\,\Delta(\mathcal{A}_{\cl X})\,N^{\star}]}.$$

We define $M=\overline{[\Delta(\mathcal{A}_{\cl Y})\,N]}=\overline{[N\,\Delta(\mathcal{A}_{\cl X})]}$, then $M$ is a TRO since \begin{align*}M\,M^{\star}\,M&=\overline{[\Delta(\mathcal{A}_{\cl Y})\,N\,N^{\star}\,\Delta(\mathcal{A}_{\cl Y})\,\Delta(\mathcal{A}_{\cl Y})\,N]}\\&=\overline{[\Delta(\mathcal{A}_{\cl Y})\,N\,N^{\star}\,\Delta(\mathcal{A}_{\cl Y})\,N]}=\overline{[\Delta(\mathcal{A}_{\cl Y})\,N\,\Delta(\mathcal{A}_{\cl X})]}=M 
 \end{align*}

Using the fact that $\overline{[\Delta(\mathcal{A}_{\cl Y})\,\mathcal{A}_{\cl Y}\,\Delta(\mathcal{A}_{\cl Y})]}=\mathcal{A}_{\cl Y}$, we get $$\overline{[M^{\star}\,\mathcal{A}_{\cl Y}\,M]}=\overline{[N^{\star}\,\Delta(\mathcal{A}_{\cl Y})\,\mathcal{A}_{\cl Y}\,\Delta(\mathcal{A}_{\cl Y})\,N]}=\overline{[N^{\star}\,\mathcal{A}_{\cl Y}\,N]}=\mathcal{A}_{\cl X},$$
and with similar arguments we have that $$\mathcal{A}_{\cl Y}=\overline{[M\,\mathcal{A}_{\cl X}\,M^{\star}]}\,\,,\Delta(\mathcal{A}_{\cl X})=\overline{[M^{\star}\,M]}\,\,,\Delta(\mathcal{A}_{\cl Y})=\overline{[M\,M^{\star}]}$$
We define the TROs $M_2=\begin{pmatrix}E_2 & 0\\

0 & 0\end{pmatrix}\,M\,\begin{pmatrix}D_2 & 0 \\

0 & 0\end{pmatrix}$ and $M_1=\begin{pmatrix}0 & 0\\

0 & E_1\end{pmatrix}\,M\,\begin{pmatrix}0 & 0 \\

0 & D_1\end{pmatrix}$

Since $\cl Y=\overline{[E_2\,Y\,E_1]}$, we have that 

\begin{align*}
 \begin{pmatrix}0 & \cl Y\\
 0 & 0\end{pmatrix}&= \begin{pmatrix}E_2 & 0\\
 0 & 0\end{pmatrix}\,\mathcal{A}_{\cl Y}\,\begin{pmatrix}0 & 0\\
 0 & E_1\end{pmatrix}=\begin{pmatrix}E_2 & 0\\
 0 & 0\end{pmatrix}\,M\,\mathcal{A}_{\cl X}\,M^{\star}\,\begin{pmatrix}0 & 0\\
 0 & E_1\end{pmatrix}\\&=\begin{pmatrix}E_2 & 0\\
 0 & 0\end{pmatrix}\,\left[M\,\Delta(\mathcal{A}_{\cl X})\,M^{\star}+M\,\begin{pmatrix}0 & \cl X\\
 0 & 0\end{pmatrix}\,M^{\star}\right]\,\begin{pmatrix}0 & 0\\
 0 & E_1\end{pmatrix}
\end{align*}

But $M\,\Delta(\mathcal{A}_{\cl X})\,M^{\star}=\Delta(\mathcal{A}_{\cl Y})=\begin{pmatrix}E_2 & 0\\

0 & E_1\end{pmatrix}$, so it holds that $$\begin{pmatrix}E_2 & 0\\

 0 & 0\end{pmatrix}\,M\,\Delta(\mathcal{A}_{\cl X})\,M^{\star}\,\begin{pmatrix}0 & 0\\

 0 & E_1\end{pmatrix}=0,$$

and thus \begin{align*}
 \begin{pmatrix}0 & \cl Y\\
 0 & 0\end{pmatrix}&=\begin{pmatrix}E_2 & 0\\
 0 & 0\end{pmatrix}\,M\,\begin{pmatrix}0 & \cl X\\
 0 & 0\end{pmatrix}\,M^{\star}\,\begin{pmatrix}0 & 0\\
 0 & E_1\end{pmatrix}\\&=\begin{pmatrix}E_2 & 0\\
 0 & 0\end{pmatrix}\,M\,\begin{pmatrix}D_2 & 0\\
 0 & 0\end{pmatrix}\,\begin{pmatrix}0 & \cl X\\
 0 & 0\end{pmatrix}\,\begin{pmatrix}0 & 0\\
 0 & D_1\end{pmatrix}\,M^{\star}\,\begin{pmatrix}0 & 0\\
 0 & E_1\end{pmatrix}=\\ & M_2\,\begin{pmatrix}0 & \cl X\\
 0 & 0\end{pmatrix}\,M_1^{\star}
\end{align*}

Similarly, $\begin{pmatrix}0 & \cl X\\

0 & 0\end{pmatrix}=M_2^{\star}\,\begin{pmatrix}0 & \cl Y\\

0 & 0\end{pmatrix}\,M_1$,

and therefore $$\begin{pmatrix}0 & \cl X\\

0 & 0\end{pmatrix}\sim_{TRO}\begin{pmatrix}0 & \cl Y\\

0 & 0\end{pmatrix}$$.

Since $(\mathbb{C},0)\,\begin{pmatrix}0 & \cl X\\

0 & 0\end{pmatrix}\,\begin{pmatrix}0\\

\mathbb{C}\end{pmatrix}=\cl X$ and $\begin{pmatrix}\mathbb{C}\\

0\end{pmatrix}\,\cl X\,(0,\mathbb{C})=\begin{pmatrix}0 & \cl X\\

0 & 0\end{pmatrix}$.

we have that $X\sim_{TRO}\begin{pmatrix}0 & \cl X\\

0 & 0\end{pmatrix}$.

Similarly, $\cl Y\sim_{TRO}\begin{pmatrix}0 & \cl Y\\

0 & 0\end{pmatrix}$.
Therefore, according to Lemma \ref{lem1}, we get $\cl X\sim_{\Delta}\cl Y$.

\end{proof}

Theorem \ref{forunitops} and Theorem 3.11 in \cite{Elekak} imply the following corollary:

\begin{corollary}

$\Delta$-Morita equivalence of operator spaces is an equivalence relation.

\end{corollary}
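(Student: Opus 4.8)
The plan is to observe that Theorem~\ref{forunitops} identifies the relation ``$\Delta$-Morita equivalent'' with the relation $\sim_\Delta$ on the class of operator spaces, and then to transport the equivalence-relation properties of $\sim_\Delta$ (supplied by the cited Theorem~3.11 of \cite{Elekak}) across this identification.

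First I would record the exact content of Theorem~\ref{forunitops}: for \emph{arbitrary} operator spaces $\cl X,\cl Y$, the assertion $\cl X\sim_\Delta\cl Y$ holds if and only if $\cl X$ and $\cl Y$ are $\Delta$-Morita equivalent. Because this biconditional is stated for every pair of operator spaces, the two relations $\sim_\Delta$ and ``$\Delta$-Morita equivalent'' coincide as binary relations on the class of operator spaces; in particular they induce the same partition.

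Next I would invoke Theorem~3.11 of \cite{Elekak}, which guarantees that $\sim_\Delta$ is reflexive, symmetric and transitive. Since a relation that coincides with an equivalence relation is itself an equivalence relation, the three properties transfer verbatim: each $\cl X$ is $\Delta$-Morita equivalent to itself (reflexivity); if $\cl X$ is $\Delta$-Morita equivalent to $\cl Y$ then $\cl Y$ is $\Delta$-Morita equivalent to $\cl X$ (symmetry); and if $\cl X$ is $\Delta$-Morita equivalent to $\cl Y$ and $\cl Y$ to $\cl Z$, then, translating each hypothesis into $\sim_\Delta$ via Theorem~\ref{forunitops}, applying transitivity of $\sim_\Delta$, and translating the conclusion back, $\cl X$ is $\Delta$-Morita equivalent to $\cl Z$ (transitivity).

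There is no genuine obstacle; the corollary is a formal consequence of the biconditional. The only point meriting a moment's care is that Theorem~\ref{forunitops} must be applied in both directions at each use of transitivity — converting $\Delta$-Morita equivalence into $\sim_\Delta$ so as to feed the transitivity of $\sim_\Delta$, and then converting the resulting $\sim_\Delta$-conclusion back into $\Delta$-Morita equivalence — which is legitimate precisely because the theorem is a biconditional holding for all operator spaces.
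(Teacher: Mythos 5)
Your proposal is correct and is exactly the paper's argument: the corollary is derived by combining the biconditional of Theorem~\ref{forunitops} (identifying $\Delta$-Morita equivalence with $\sim_\Delta$) with Theorem~3.11 of \cite{Elekak} (that $\sim_\Delta$ is an equivalence relation). Your added remark about applying the biconditional in both directions at each transitivity step is a fair spelling-out of what the paper leaves implicit.
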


\begin{theorem}

\label{the}

Let $\cl X\,,\cl Y$ be operator spaces. The following are equivalent:\\
$i)\,\,\cl X\sim_{\sigma\,\Delta} \cl Y$\\
$ii)$ $\cl X$ and $\cl Y$ are $\sigma\,\Delta$-Morita equivalent\\
$iii)$ $\cl X$ and $\cl Y$ are stably isomorphic.

\end{theorem}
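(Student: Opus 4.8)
The plan is to read $i)\iff ii)$ as the $\sigma$-refinement of Theorem \ref{forunitops}, and to reach $iii)$ by passing to the algebraic extensions and invoking the stable-isomorphism theorem for $\sigma\Delta$-pairs established in Section 2.

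For $i)\iff ii)$ I would run the proof of Theorem \ref{forunitops} verbatim, checking that the $\sigma$-hypotheses propagate at each step. In $i)\implies ii)$ one starts from $\sigma$-TROs $M_1\subseteq\bb B(H_1,K_1)$, $M_2\subseteq\bb B(H_2,K_2)$ implementing $\cl X\sim_{\sigma TRO}\cl Y$; the diagonal $C^\star$-algebras $D_i=\overline{[M_i^\star M_i]}$, $E_i=\overline{[M_iM_i^\star]}$ then carry $\sigma$-units, so the extensions $\mathcal A_{\cl X},\mathcal A_{\cl Y}$ are genuine $\sigma\Delta$-extensions and the block TRO $M=\mathrm{diag}(M_2,M_1)$ is again a $\sigma$-TRO; the $\Delta$-Morita equivalence of the pairs is then exactly as in Theorem \ref{forunitops}. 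For $ii)\implies i)$ one recovers $M_1,M_2$ by compressing the implementing TRO $N$ to the corners, and since $N$ and the diagonal algebras are $\sigma$-unital the compressions inherit the $\sigma$-TRO property, giving $\cl X\sim_{\sigma TRO}\cl Y$, that is, $\cl X\sim_{\sigma\Delta}\cl Y$.

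For $ii)\iff iii)$ I would combine the above with the Section 2 theorem for $\sigma\Delta$-pairs. Applied to $(\mathcal A_{\cl X},\Delta(\mathcal A_{\cl X}))$ and $(\mathcal A_{\cl Y},\Delta(\mathcal A_{\cl Y}))$, that theorem makes $ii)$ equivalent to the existence of a completely isometric isomorphism $$\phi:\mathcal A_{\cl X}\otimes\cl K\to\mathcal A_{\cl Y}\otimes\cl K,\qquad \phi\bigl(\Delta(\mathcal A_{\cl X})\otimes\cl K\bigr)=\Delta(\mathcal A_{\cl Y})\otimes\cl K.$$ The task is then to trade this diagonal-preserving isomorphism of the extensions for a plain stable isomorphism of the spaces. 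Here I would recover the off-diagonal corner intrinsically, $$\begin{pmatrix}0 & \cl X\otimes\cl K\\ 0 & 0\end{pmatrix}=\overline{\bigl[(D_2\otimes\cl K)\,(\mathcal A_{\cl X}\otimes\cl K)\,(D_1\otimes\cl K)\bigr]},$$ and apply $\phi$. Since $\phi$ is a $\star$-isomorphism on the diagonal, it carries $D_2\otimes\cl K$ and $D_1\otimes\cl K$ onto complementary ideals of $(E_2\otimes\cl K)\oplus(E_1\otimes\cl K)$; computing $\phi$ of the corner exhibits which diagonal summands are hit, and nondegeneracy of the module actions together with surjectivity of $\phi$ forces $\phi(D_2\otimes\cl K)=E_2\otimes\cl K$ and $\phi(D_1\otimes\cl K)=E_1\otimes\cl K$ (the opposite matching would push the corner into the zero lower-left block, contradicting injectivity). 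Hence $\phi$ restricts to a completely isometric isomorphism $\cl X\otimes\cl K\cong\cl Y\otimes\cl K$, which is $iii)$. For the converse $iii)\implies i)$ I would use the chain $$\cl X\sim_{\sigma TRO}\cl K\otimes\cl X\cong\cl K\otimes\cl Y\sim_{\sigma TRO}\cl Y,$$ where the outer equivalences are the standard column-TRO stabilisations and the middle one is the given stable isomorphism; invoking Lemma \ref{lem1} (transitivity of $\sim_{\sigma TRO}$) and absorbing the isomorphism of the stabilisations into a $\sigma$-TRO-equivalence yields $\cl X\sim_{\sigma TRO}\cl Y$, hence $i)$.

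The main obstacle is this last absorption step in $iii)\implies i)$: turning an abstract completely isometric isomorphism $\cl K\otimes\cl X\cong\cl K\otimes\cl Y$ into a concrete $\sigma$-TRO-equivalence is precisely the operator-space form of the Brown--Green--Rieffel phenomenon and is not formal; I expect to need the stable-isomorphism machinery of \cite{Elest,Elekak}, or else to route all of $ii)\iff iii)$ through the algebras as above, where the corner argument carries the load. The secondary delicate point is the summand-matching of the diagonal under $\phi$, which is settled by the nondegeneracy-plus-injectivity observation indicated.
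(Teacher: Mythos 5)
Your equivalence $i)\iff ii)$ is exactly the paper's: the proof there consists of the single line ``check the proof of Theorem \ref{forunitops}'', i.e.\ precisely the $\sigma$-propagation you describe. Where you genuinely diverge is in the passage to $iii)$. The paper does not route $i)\implies iii)$ through the algebraic extensions at all: it simply cites \cite[Theorem 4.6]{Elekak}. Your alternative --- apply the $\sigma\Delta$-pair theorem of Section 2 to $(\mathcal A_{\cl X},\Delta(\mathcal A_{\cl X}))$ and $(\mathcal A_{\cl Y},\Delta(\mathcal A_{\cl Y}))$ (these are indeed $\sigma\Delta$-pairs, since $\Delta(\mathcal A_{\cl X})=D_2\oplus D_1$ is $\sigma$-unital), obtaining a diagonal-preserving completely isometric isomorphism $\phi$ of the stabilised extensions, and then extract the off-diagonal corner --- is a legitimate and more self-contained derivation, in the spirit of how the paper itself uses these extensions in Theorem \ref{forunitops}. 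For $iii)\implies i)$ the paper's proof is literally your chain $\cl X\sim_{\sigma TRO}K_\infty(\cl X)$, $K_\infty(\cl X)\cong K_\infty(\cl Y)$, $K_\infty(\cl Y)\sim_{\sigma TRO}\cl Y$, closed by transitivity of $\sim_{\sigma\Delta}$. Your ``main obstacle'' here is illusory: no Brown--Green--Rieffel-type absorption is needed, because $\sim_{\sigma\Delta}$ is representation-free by definition --- $\cl X\sim_{\sigma TRO}K_\infty(\cl X)$ concretely, and composing the abstract complete isometry $K_\infty(\cl Y)\cong K_\infty(\cl X)$ with the concrete embedding already exhibits $\cl X\sim_{\sigma\Delta}K_\infty(\cl Y)$. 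The only nontrivial input is transitivity of $\sim_{\sigma\Delta}$ (Lemma \ref{lem1} and \cite{Elekak}), which the paper invokes exactly as you would.

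The one genuine soft spot is your summand-matching step in $ii)\implies iii)$. The justification you give (``the opposite matching would push the corner into the zero lower-left block'') only excludes the total swap $\phi(D_2\otimes\cl K)=E_1\otimes\cl K$: since $\phi|_{\Delta}$ is a $\star$-isomorphism onto $(E_2\otimes\cl K)\oplus(E_1\otimes\cl K)$, the image $\phi(D_2\otimes\cl K)$ is a priori only an ideal of the form $P_2\oplus P_1$ with $P_i$ an ideal of $E_i\otimes\cl K$, and partial mixing of the two summands is not ruled out by your observation. Fortunately, exact matching is not needed. Write $\phi(D_2\otimes\cl K)=P_2\oplus P_1$ and $\phi(D_1\otimes\cl K)=Q_2\oplus Q_1$; injectivity of $\phi$ gives $P_i\cap Q_i=0$, hence $P_i\,(E_i\otimes\cl K)\,Q_i\subseteq P_i\,Q_i\subseteq P_i\cap Q_i=0$, so that
\begin{equation*}
\phi(\cl X\otimes\cl K)=\overline{\bigl[\phi(D_2\otimes\cl K)\,(\mathcal A_{\cl Y}\otimes\cl K)\,\phi(D_1\otimes\cl K)\bigr]}=\overline{\bigl[P_2\,(\cl Y\otimes\cl K)\,Q_1\bigr]}
\end{equation*}
lands inside the upper-right corner; then surjectivity of $\phi$ combined with the completely contractive compression onto that corner (which annihilates the diagonal) forces $\phi(\cl X\otimes\cl K)=\cl Y\otimes\cl K$. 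With that repair your route closes: correct in substance, genuinely different from the paper in $ii)\iff iii)$, and over-engineered only in $iii)\implies i)$.
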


\begin{proof}

$i)\iff ii)$ Check the proof of the Theorem \ref{forunitops}.\\

$i)\implies iii)$ See \cite[Theorem 4.6]{Elekak}.\\

$iii)\implies i)$ Since $\cl X\sim_{st} \cl Y$, we have that $K_{\infty}(\cl X)\cong K_{\infty}(\cl Y)$, but since $\cl X\sim_{\sigma\,TRO} K_{\infty}(\cl X)$, we get $\cl X\sim_{\sigma\,\Delta}K_{\infty}(\cl Y).$ Also, $\cl Y\sim_{\sigma TRO}K_{\infty}(\cl Y)$, so $\cl Y\sim_{\sigma\,\Delta}K_{\infty}(\cl Y)$, and due to the fact that $\sim_{\sigma\,\Delta}$ is an equivalence relation, we have $\cl X\sim_{\sigma\,\Delta} \cl Y.$

\end{proof}

\section{$\Delta$-Morita equivalence of unital operator spaces }

\begin{definition}

We call an operator space $\cl X$ unital if there exists a completely isometric map $\phi:\cl X\to \mathbb{B}(H)$ such that $I_{H}\in\phi(\cl X).$
\end{definition}

If $\cl Y$ is an operator space that is bimodule over the $C^\star$ algebra $\cl A,$ 
we say that the map $$(\pi, \psi, \pi ):
\;_{\cl A}\cl Y_{\cl A}\rightarrow \bb B(H)$$ is a completely contractive bimodule map
 if $\psi: \cl Y\rightarrow \bb B(H)$ is a completely contractive map and $\pi:\cl A\rightarrow \bb B(
 H)$ is a $*-$homomorphism such that
 $$\phi(asb)=\pi(a)\phi(s)\pi(b),\;\;\;\forall\;a,b\;\in\;\cl A,\;\;s\;\in\;\cl Y.$$

\begin{lemma}\label{x} Let $\cl X, \cl Y$ be operator spaces and $M$ be a TRO such that 

$$\cl X=\overline{[M\cl YM^\star]},\;\;\; \cl Y=\overline{[M\cl XM^\star]}.$$ We denote 
$ \cl A=\overline{[M^\star M]}, B=\overline{[MM^\star]}.$ 
For every completely isometric bimodule map $$(\pi, \psi, \pi ):
\;_{\cl A}\cl Y_{\cl A}\rightarrow \bb B(H)$$ there exists a completely isometric bimodule map $$(\sigma, \phi, \sigma ): \;_{\cl B}\cl X_{\cl B}\rightarrow \bb B(K)$$ 
and a TRO $N\subseteq \bb B(H,K)$ such that $$\psi (\cl Y)=
\overline{[N^\star\phi (\cl X)N]}, \;\;\phi (\cl X)=\overline{[N\psi (\cl Y)N^\star ]}, 
\;\;\pi (\cl A)=\overline{[N^\star N]},\;\;\sigma (\cl B)=\overline{[NN^\star ]}.$$ 

\end{lemma}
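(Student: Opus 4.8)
The plan is to induce the given representation $(\pi,\psi,\pi)$ of $\cl Y$ up through the TRO $M$ by a Rieffel-type interior tensor product, in exactly the spirit of the constructions appearing after Theorem \ref{main} and in the preceding lemma. First I would record the two essentiality identities
\[
\overline{[\cl A\,\cl Y\,\cl A]}=\cl Y,\qquad \overline{[B\,\cl X\,B]}=\cl X,
\]
which follow from the defining relations between $\cl X,\cl Y$ together with $\overline{[MM^\star M]}=M$, and which say that $\cl Y$ is an essential $\cl A$-bimodule and $\cl X$ an essential $B$-bimodule, as the hypotheses require. Then I would form the Hilbert space $K=M\otimes_{\cl A}H$, the interior tensor product of $M$ (a right Hilbert $\cl A$-module with $\cl A$-valued inner product $\langle m,m'\rangle_{\cl A}=m^\star m'$) and $H$ (a left $\cl A$-module via $\pi$), with inner product $\langle m\otimes\xi,m'\otimes\xi'\rangle=\langle\pi(m'^\star m)\xi,\xi'\rangle$; as in the $HMOD$ discussion this is a genuine Hilbert space.

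Next I would put, for each $m\in M$, $r_m\colon H\to K$, $r_m\xi=m\otimes\xi$, and set $N=\overline{r(M)}\subseteq\bb B(H,K)$. A direct computation gives $r_m^\star r_{m'}=\pi(m^\star m')$ and $r_{m_1}r_{m_2}^\star r_{m_3}=r_{m_1m_2^\star m_3}$, so $N$ is a TRO and $r\colon M\to N$ is a ternary homomorphism. Because $\psi$ is completely isometric and $\cl Y$ is an essential $\cl A$-bimodule, $\pi$ is faithful on $\cl A$, so $r$ is injective and hence, by the Harris--Kaup lemma (Lemma 8.3.2 of \cite{BleLeM04}) used above, a complete isometry. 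From $r_m^\star r_{m'}=\pi(m^\star m')$ this yields $\overline{[N^\star N]}=\pi(\overline{[M^\star M]})=\pi(\cl A)$, while the ternary isomorphism $r$ induces a $\star$-isomorphism $B=\overline{[MM^\star]}\to\overline{[NN^\star]}$; defining $\sigma\colon B\to\bb B(K)$ by $\sigma(mm'^\star)=r_mr_{m'}^\star$ gives a completely isometric $\star$-homomorphism with $\sigma(B)=\overline{[NN^\star]}$. This settles the third and fourth required identities.

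I would then define $\phi\colon\cl X\to\bb B(K)$ on elementary elements by $\phi(m\,y\,m'^\star)=r_m\,\psi(y)\,r_{m'}^\star$ and extend it linearly and continuously; by construction $\phi(\cl X)=\overline{[N\,\psi(\cl Y)\,N^\star]}$. Using $r_m^\star r_{m'}=\pi(m^\star m')$ and the bimodule identity for $\psi$, one gets for $m_0,m_1\in M$
\[
r_{m_0}^\star\,\phi(m\,y\,m'^\star)\,r_{m_1}=\pi(m_0^\star m)\,\psi(y)\,\pi(m'^\star m_1)=\psi\bigl((m_0^\star m)\,y\,(m'^\star m_1)\bigr),
\]
whence $\overline{[N^\star\,\phi(\cl X)\,N]}=\psi(\overline{[\cl A\,\cl Y\,\cl A]})=\psi(\cl Y)$ by the first essentiality identity. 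The same bookkeeping, together with associativity of the ternary products, gives $\phi(b_1xb_2)=\sigma(b_1)\phi(x)\sigma(b_2)$ for $b_1,b_2\in B$, so $(\sigma,\phi,\sigma)$ is a bimodule map once $\phi$ is known to be well defined and completely isometric.

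The main obstacle is precisely the well-definedness and complete isometry of $\phi$, since a priori its value depends on the chosen representation $x=\sum_i m_i\,y_i\,m_i'^\star$. I would resolve this exactly as in the lemma following Theorem \ref{main}: using the essentiality identities one has completely isometric identifications $\cl X\cong M\otimes_{\cl A}^h\cl Y\otimes_{\cl A}^h M^\star$ and $\overline{[N\,\psi(\cl Y)\,N^\star]}\cong N\otimes_{\pi(\cl A)}^h\psi(\cl Y)\otimes_{\pi(\cl A)}^h N^\star$, via Lemma 5.4 of \cite{Elekak}. Since $r\colon M\to N$ is a completely isometric ternary isomorphism intertwining $\pi$ with the module structure, and $\psi$ is a completely isometric $\cl A$-bimodule map, the induced map between these balanced triple Haagerup tensor products is a complete isometry at every matrix level; transporting it back shows simultaneously that $\phi$ is well defined, completely isometric, and that $\phi(\cl X)=\overline{[N\,\psi(\cl Y)\,N^\star]}$ holds exactly. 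Assembling the four displayed identities then completes the proof.
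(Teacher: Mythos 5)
Your construction is, in outline, exactly the paper's: the paper also forms the Hilbert space $K=M\otimes^h_{\cl A}H$ with the same inner product, takes $N$ to be the image of $M$ under the creation maps (your $r_m$ are the paper's $\mu(m)$, up to restriction to the essential subspace), defines $\phi(msn^{\star})=\mu(m)\psi(s)\mu(n)^{\star}$ and $\sigma$ as left multiplication, and verifies the same four identities, dismissing well-definedness and complete isometry of $\phi$ as ``the usual arguments.'' However, your route to those arguments contains a genuine error: the claim that, because $\psi$ is completely isometric and $\cl Y$ is an essential $\cl A$-bimodule, $\pi$ is faithful on $\cl A$. This is false. From $\pi(a)=0$ you can only deduce $a\,\cl Y=0$ (via injectivity of $\psi$), and essentiality $\cl Y=\overline{[\cl A\,\cl Y\,\cl A]}$ does not force $a=0$. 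Concretely: take $H'=K'=\bb C^2$, $M=\left\{\mathrm{diag}(s,t):s,t\in\bb C\right\}$, $\cl X=\cl Y=\bb C\,E_{11}$, so that $\cl A=B$ is the diagonal algebra; take $H=\bb C$, $\pi(\mathrm{diag}(c,d))=c$, $\psi(c\,E_{11})=c$. This $(\pi,\psi,\pi)$ is a completely isometric bimodule map in the paper's sense (the definition preceding Lemma \ref{x} only requires $\psi$ to be completely isometric and $\pi$ to be a $\star$-homomorphism), yet $\pi(E_{22})=0$. In this example your $r$ is not injective ($r_{\mathrm{diag}(0,1)}=0$), so the appeal to Harris--Kaup fails; $r$ does not induce a $\star$-isomorphism of $B$ onto $\overline{[NN^{\star}]}$ (here $\bb C$ versus the two-dimensional diagonal algebra), $\sigma$ is not isometric, and—most importantly—your final paragraph, which derives the complete isometry of $\phi$ from ``$r$ is a completely isometric ternary isomorphism,'' collapses. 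Since the complete isometry of $\phi$ is the one substantive point of the lemma, this is a real gap rather than a cosmetic one.

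The gap is repairable without faithfulness, and the repair is what the ``usual arguments'' actually are. From $r_m^{\star}r_{m'}=\pi(m^{\star}m')$ and the bimodule identity one gets the compression formula $r_{l'}^{\star}\,\phi(x)\,r_l=\psi(l'^{\star}\,x\,l)$ for $x\in\cl X$, $l,l'\in M$. Now choose nets $u_\lambda\in\bb M_{1,k_\lambda}(M)$ and $v_\mu\in\bb M_{1,j_\mu}(M)$ of norm at most one with $u_\lambda u_\lambda^{\star}x\to x$ and $x\,v_\mu v_\mu^{\star}\to x$ for all $x\in\cl X$ (these exist exactly as in the proof of Theorem \ref{main}); then $\nor{\phi(x)}\geq\nor{\psi_{(k_\lambda,j_\mu)}(u_\lambda^{\star}\,x\,v_\mu)}=\nor{u_\lambda^{\star}\,x\,v_\mu}$ because $\psi$ is completely isometric, while $\nor{u_\lambda u_\lambda^{\star}\,x\,v_\mu v_\mu^{\star}}\leq\nor{u_\lambda^{\star}\,x\,v_\mu}$ forces $\limsup_{\lambda,\mu}\nor{u_\lambda^{\star}\,x\,v_\mu}\geq\nor{x}$; repeating the computation at each matrix level yields the complete isometry of $\phi$, with no mention of faithfulness of $\pi$. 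Note also that the conclusion of the lemma does not require $\sigma$ to be isometric: defining $\sigma(b)(l\otimes\xi)=bl\otimes\xi$ gives a well-defined $\star$-homomorphism with $\sigma(B)=\overline{[NN^{\star}]}$ unconditionally, and this is all the statement asks. With this substitution (or with faithfulness of $\pi$ added as an explicit hypothesis, which does hold in the paper's applications of the lemma), your argument becomes a correct and more detailed version of the paper's proof.
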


\begin{proof}Suppose that $K=M\otimes^h _AH$ is the Hilbert space with the inner product given by 
$$\sca{m\otimes \xi , n\otimes \omega }=\sca{\pi (n^\star m)\xi , \omega }
\;\;m,n\,\in\,M, \;\;\xi ,\omega \;\in \;H.$$ 

By the usual arguments, we can define a completely isometric map $\phi : \cl X
\rightarrow \bb B(K)$ given by $$\phi (msn^\star )(l\otimes \xi )=m\otimes 
\psi (sn^\star l)(\xi ), m,n,l \in M, s\in \cl S, \xi \in H$$
and the $*-$homomorphism $\sigma : B\rightarrow \bb B(K)$ given by 
$$\sigma (mn^\star )(l\otimes \xi )=m\otimes \pi (n^\star l)(\xi ).$$

We also define the map 
$\mu : M\rightarrow \bb B(L, K)$ given by 

$$\mu (m)(\pi (n^\star l)(\xi ))=(mn^\star l)\otimes \xi $$ and the map 
$\nu : M^\star \rightarrow \bb B(K, L)$ given by 

$$\nu (m^\star )(l\otimes \xi )=\pi (m^\star l)(\xi ).$$

We can easily see that $\nu (m^\star )=\mu (m)^\star$ for all $m$, and $N=\mu (M)$ is a TRO satisfying 
$$\psi (\cl Y)=\overline {[N^\star\phi (\cl X)N]}, \;\;\phi (\cl X)=
\overline {[N\psi (\cl Y)N^\star ]}, \;\;\pi (A)=\overline {[N^\star N]},\;\;\sigma (B)=
\overline{[NN^\star ]}.$$ 

\end{proof}

\begin{lemma}\label{xx} Let $\cl X\subseteq \bb B(K^\prime ), \cl Y\subseteq \bb B(H^\prime)$ be unital operator spaces and $M\subseteq \bb B(H^\prime , K^\prime )$ be a TRO such that 
$$\cl X=\overline{[M\cl YM^\star ]}, \cl Y=\overline{[M^\star \cl XM]}.$$ We denote 
$ \cl A=\overline {[M^\star M]}, \cl B=\overline {[MM^\star ]}.$ For every completely isometric bimodule map $$(\pi, \psi, \pi ):
\;_{\cl A}\cl Y_{\cl A}\rightarrow \bb B(H)$$ there exists a unital completely isometric 
bimodule map $$(\sigma, \phi, \sigma ): \:_{\cl B}\cl X_{\cl B}\rightarrow \bb B(K)$$ 
and a TRO $N\subseteq \bb B(H,K)$ such that $$\psi (\cl Y)=
\overline{[N^\star \phi (\cl X)N]}, \;\;\phi (\cl X)=\overline{[N\psi (\cl Y)N^\star ]}, 
\;\;\pi (A)=\overline{[N^\star N]},\;\;\sigma (B)=\overline{[NN^\star ]}.$$ 

\end{lemma}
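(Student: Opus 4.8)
The plan is to reuse the construction of Lemma \ref{x} verbatim and to add only the verification of unitality. First I would form the Hilbert space $K=M\otimes^h_{\cl A}H$ with inner product $\sca{m\otimes\xi,n\otimes\omega}=\sca{\pi(n^\star m)\xi,\omega}$, and define the complete isometry $\phi:\cl X\to\bb B(K)$ by $\phi(msn^\star)(l\otimes\xi)=m\otimes\psi(sn^\star l)\xi$, the $*$-homomorphism $\sigma:\cl B\to\bb B(K)$ by $\sigma(mn^\star)(l\otimes\xi)=m\otimes\pi(n^\star l)\xi$, the maps $\mu,\nu$, and the TRO $N=\mu(M)$ exactly as there. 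The four identities $\psi(\cl Y)=\overline{[N^\star\phi(\cl X)N]}$, $\phi(\cl X)=\overline{[N\psi(\cl Y)N^\star]}$, $\pi(\cl A)=\overline{[N^\star N]}$ and $\sigma(\cl B)=\overline{[NN^\star]}$ then follow word for word from Lemma \ref{x}, so the only new point to establish is that $(\sigma,\phi,\sigma)$ is unital.

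The crucial observation is that the unital hypotheses on $\cl X,\cl Y$ force $\cl A$ and $\cl B$ to be unital. Since $I_{H'}\in\cl Y$, we have $\cl B=\overline{[MM^\star]}=\overline{[MI_{H'}M^\star]}\subseteq\overline{[M\cl YM^\star]}=\cl X$, and using the TRO identity $\overline{[MM^\star M]}=M$ one gets $\cl X=\overline{[\cl B\cl X]}$. Hence $\cl X$ is a nondegenerate left $\cl B$-module, so any approximate unit $(u_\mu)$ of $\cl B$ satisfies $u_\mu x\to x$ for all $x\in\cl X$; taking $x=I_{K'}$ and noting $u_\mu I_{K'}=u_\mu$ yields $u_\mu\to I_{K'}$, whence $I_{K'}\in\cl B$. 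The symmetric argument gives $I_{H'}\in\cl A$, so $\cl A,\cl B$ are unital $C^\star$-algebras with units $I_{H'},I_{K'}$.

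With this in hand I would verify that the induced representation $\sigma$ maps $I_{K'}$ to $I_K$. Choose $u_\mu=\sum_i m_im_i^\star\in MM^\star$ with $u_\mu\to I_{K'}$ in norm (possible since $I_{K'}\in\cl B$). For simple tensors, $\sca{\sigma(u_\mu)(l\otimes\xi),l\otimes\xi}=\sca{\pi(l^\star u_\mu l)\xi,\xi}\to\sca{\pi(l^\star l)\xi,\xi}=\nor{l\otimes\xi}^2$, because $l^\star u_\mu l\to l^\star l$ in $\cl A$ and $\pi$ is contractive; since $\sigma$ is continuous this gives $\sca{\sigma(I_{K'})(l\otimes\xi),l\otimes\xi}=\nor{l\otimes\xi}^2$. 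As $\sigma(I_{K'})$ is a projection and the simple tensors are dense in $K$, the equality $\sca{\sigma(I_{K'})\eta,\eta}=\nor{\eta}^2$ on a dense set of $\eta$ forces $\sigma(I_{K'})=I_K$, i.e. $\sigma$ is unital. If one also normalises $\pi$ so that $\pi(I_{H'})=I_H$ (replacing $H$ by $\pi(I_{H'})H$, which leaves $K$ unchanged since $m\otimes\xi=mI_{H'}\otimes\xi=m\otimes\pi(I_{H'})\xi$) and uses that $\psi$ is unital, then writing $a=a\cdot I_{H'}$ for $a\in\cl A\subseteq\cl Y$ gives $\psi|_{\cl A}=\pi$; hence $\phi$ and $\sigma$ agree on $\cl B$ and $\phi(I_{K'})=\sigma(I_{K'})=I_K$ as well.

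The routine ingredients—the Hilbert space structure of $K$, the formulas for $\phi,\sigma,\mu,\nu$, and the four TRO relations—are inherited directly from Lemma \ref{x}. The one genuinely new step, and the only place I expect real work, is the transfer of unitality: showing that the coefficient algebras $\cl A,\cl B$ acquire the units $I_{H'},I_{K'}$ from the unital spaces $\cl Y,\cl X$ (via $\cl X=\overline{[\cl B\cl X]}$ and the bounded-approximate-unit argument) and then that the induced $*$-homomorphism $\sigma$ is nondegenerate, so that $\sigma(I_{K'})=I_K$.
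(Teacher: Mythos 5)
You follow the same skeleton as the paper's own proof: take $K=M\otimes^h_{\cl A}H$, $\phi$, $\sigma$, $\mu$, $N=\mu(M)$ verbatim from Lemma \ref{x}, inherit the four TRO relations, and reduce the lemma to the unitality check --- this is exactly what the paper does. You diverge in how unitality is verified, and your route has two genuine merits. First, you actually prove $I_{K^\prime}\in\cl B$ (via $\cl B\subseteq\cl X$, $\cl X=\overline{[\cl B\cl X]}$ and the approximate-unit argument), whereas the paper simply writes ``Assume that $I_{K^\prime}=\lim_\lambda\sum_i m_i^\lambda(m_i^\lambda)^\star$'' (elsewhere it cites Lemma 4.9 of \cite{Elekak} for unitality of these algebras); this fills a step the paper leaves silent. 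Second, you establish $\sigma(I_{K^\prime})=I_K$ by the quadratic-form/projection argument, which uses nothing about $\psi$, while the paper instead computes $\phi(I_{K^\prime})(l\otimes\xi)=l\otimes\xi$ directly on elementary tensors. One phrasing slip: elementary tensors are \emph{not} dense in $K$, only their linear span is; but since $P=\sigma(I_{K^\prime})$ is a projection and $\sca{P\eta,\eta}=\nor{\eta}^2$ forces $P\eta=\eta$, the fixed-point space of $P$ is a closed subspace containing all elementary tensors, hence equals $K$, so your conclusion stands. (Also, the special positive form $u_\mu=\sum_i m_im_i^\star$ is immaterial; any net from $\mathrm{span}(MM^\star)$ converging to $I_{K^\prime}$ feeds the same computation, which spares you a polarization argument.)

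The one step to scrutinize is your transfer from $\sigma$ to $\phi$: you ``use that $\psi$ is unital,'' which is \emph{not} among the stated hypotheses --- the lemma assumes only that $(\pi,\psi,\pi)$ is a completely isometric bimodule map. After your normalisation $\pi(I_{H^\prime})=I_H$, unitality of $\psi$ is equivalent to $\psi|_{\cl A}=\pi$ (both directions via $\psi(a)=\pi(I_{H^\prime})\psi(I_{H^\prime})\pi(a)$, using $I_{H^\prime}\in\cl A$, which your first paragraph supplies), and $\psi|_{\cl A}=\pi$ is precisely the compatibility the paper itself invokes as ``we can easily see that $\psi|_{M^\star M}=\pi$,'' without proof. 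Neither statement follows formally from the bimodule axioms: for $\cl X=\cl Y=M=\bb C$, $\pi(\lambda)=\lambda I_H$ and $\psi(\lambda)=\lambda V$ with $V$ a nonscalar unitary, all hypotheses hold but $\psi|_{\cl A}\neq\pi$, and then $\overline{[N^\star\phi(\cl X)N]}=\overline{[N^\star N]}=\pi(\cl A)\neq\psi(\cl Y)$ for any admissible $N$, so some such compatibility is genuinely needed. It does hold in every application made in the paper (the maps $\psi$ in Lemma \ref{unit} arise from concrete inclusions where $\psi$ restricts to $\pi$ by construction). So your proof is at exact parity with the paper's on this point --- you assume openly what the paper asserts as easy --- and is otherwise complete and somewhat more careful.
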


\begin{proof}Suppose that $K, \psi ,\mu ,\sigma, N$ are as in the proof of Lemma \ref{x}. We can see that 
$$\phi (msn^\star )=\mu (m)\psi (s)\mu (n)^\star , \;\;\forall \;m,n\;\in M, \;s\;\in \cl X.$$

Since $\cl Y$ is unital, we have that $$\phi (mn^\star)=
\mu (m)\psi (I_{H^\prime} )\mu (n)^\star , \;\;\forall \;m,n\;\in M.$$

Assume that $$I_{K^\prime }= \lim_\lambda \sum_{i=1}^{k_\lambda }m_i^\lambda (m_i^\lambda )^\star.$$ If $l\in M, \xi \in H$, we have 
$$\phi (I_{K^\prime} )(l\otimes \xi )=\lim_\lambda \sum_{i=1}^{k_\lambda }\mu (m_i^\lambda )\psi (I_{H^\prime}
)\mu ((m_i^\lambda ))^\star (l\otimes \xi ) =$$
$$ \lim_\lambda \sum_{i=1}^{k_\lambda }\mu (m_i^\lambda )\psi (I_{H^\prime})(\pi ((m_i^\lambda )^\star l)(\xi ) =
\lim_\lambda \sum_{i=1}^{k_\lambda }\mu (m_i^\lambda )\psi ((m_i^\lambda )^\star l)(\xi ).$$

We can easily see that $\psi |_{M^\star M}=\pi,$
thus $$\phi (I_{K^\prime} )(l\otimes \xi )=\lim_\lambda \sum_{i=1}^{k_\lambda }\mu (m_i^\lambda )\pi ((m_i^\lambda )^\star l)(\xi )=\lim_\lambda 
(\sum_{i=1}^{k_\lambda }m_i^\lambda (m_i^\lambda )^\star l)\otimes \xi =l\otimes \xi.$$

Therefore, $\phi (I_{K^\prime} )=I_K.$

\end{proof}

\begin{lemma}\label{unit}

Let $\cl X\,,\cl Y$ be unital operator spaces such that $\cl X\sim_{\Delta} \cl Y.$ Then, there exist completely isometric maps $$\phi:\cl X\to \mathbb{B}(H)\,,\psi:\cl Y\to \mathbb{B}(K)$$ such that $I_{H}\in\phi(\cl X)\,,I_{K}\in\psi(\cl Y)$ and a $\sigma$-TRO $L\subseteq \mathbb{B}(K,H)$ such that $$\psi(\cl Y)=\overline{[L^{\star}\,\phi(\cl X)\,L]}\,\,,\phi(\cl X)=\overline{[L\,\psi(\cl Y)\,L^{\star}]}.$$

\end{lemma}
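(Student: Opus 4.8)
The plan is to reduce the statement to a single-TRO equivalence carried by \emph{unital} representations, and then to read off both the sandwich identities and the $\sigma$-TRO property from the unital structure. Concretely, the target is a single TRO $M$ together with completely isometric representations $\cl X\subseteq\bb B(H)$, $\cl Y\subseteq\bb B(K)$ with $I_H\in\cl X$, $I_K\in\cl Y$ and
\[
\cl X=\overline{[M\,\cl Y\,M^\star]},\qquad \cl Y=\overline{[M^\star\,\cl X\,M]}.
\]
Granting this, one takes $L=M$ (or $L=M^\star$, according to orientation) and $\phi,\psi$ the inclusions, which already gives the two displayed identities; alternatively, Lemma \ref{xx}, fed with a unital completely isometric bimodule representation of $\cl Y$, repackages this into representations on prescribed Hilbert spaces with a fresh TRO. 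The delicate point, and the main obstacle, is that unpacking $\cl X\sim_\Delta\cl Y$ only supplies \emph{two} TROs $M_1,M_2$ with $\cl X=\overline{[M_2^\star\cl Y M_1]}$ and $\cl Y=\overline{[M_2\cl X M_1^\star]}$, a priori in non-unital (non-square) representations.

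The first task is therefore to manufacture a single TRO compatible with unitality. I would pass to unital square representations $\cl X\subseteq\bb B(H)$, $\cl Y\subseteq\bb B(K)$ with $M_1,M_2\subseteq\bb B(H,K)$, using that the implementing representations in the definition of $\sim_\Delta$ may be chosen freely and that the equivalence can be transported between representations via Lemmas \ref{x} and \ref{xx}. The lever for merging $M_1$ and $M_2$ is the following observation: writing $E_i=\overline{[M_iM_i^\star]}$ and $D_i=\overline{[M_i^\star M_i]}$, the module identities $\cl Y=\overline{[E_2\cl Y]}=\overline{[\cl Y E_1]}$ and $\cl X=\overline{[D_2\cl X]}=\overline{[\cl X D_1]}$ together with $I_K\in\cl Y$, $I_H\in\cl X$ force the approximate units of $E_1,E_2$ to converge to $I_K$ and those of $D_1,D_2$ to converge to $I_H$; hence $E_1,E_2$ are unital with the common unit $I_K$ and $D_1,D_2$ are unital with the common unit $I_H$. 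This common identity is what should allow $M_1$ and $M_2$ to be glued into a single TRO $M$, in the spirit of the recombination step in the proof of Theorem \ref{forunitops}, $ii)\Rightarrow i)$. I expect precisely this collapse of two TROs into one — which the definition does not hand over for free — to be the crux of the argument.

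Once a single-TRO equivalence on unital representations is secured, the remainder is formal. Set $\cl A=\overline{[M^\star M]}$ and $\cl B=\overline{[MM^\star]}$. The TRO identities give $\cl Y=\overline{[\cl A\cl Y]}$, so the approximate unit of $\cl A$ fixes $I_K$ and therefore converges to $I_K$; thus $I_K\in\cl A$ and $\cl A$ is unital, and symmetrically $\cl B$ is unital. A unital $C^\star$-algebra possesses a $\sigma$-unit, so $M$ is automatically a $\sigma$-TRO; equivalently, if one finishes through Lemma \ref{xx}, the associated $C^\star$-algebras of the produced TRO are the $*$-homomorphic images $\pi(\cl A)$ and $\sigma(\cl B)$, which are again unital, so that TRO is a $\sigma$-TRO as well. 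Taking $\phi,\psi$ the resulting unital completely isometric representations and $L$ the resulting $\sigma$-TRO then yields exactly the required conclusion.
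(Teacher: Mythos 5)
Your closing paragraph is sound and matches the paper's endgame: once a single TRO implements the equivalence between unitally represented copies of $\cl X$ and $\cl Y$, the corner $C^{\star}$-algebras are unital, hence $\sigma$-unital, so the TRO is automatically a $\sigma$-TRO. But the step you yourself flag as the crux --- collapsing the two TROs $M_1,M_2$ supplied by the definition of $\sim_{\Delta}$ into one TRO while keeping the representations unital --- is exactly what you do not prove, and neither of your two suggested levers can supply it. First, Lemmas \ref{x} and \ref{xx} cannot be used to ``pass to unital square representations'' at the outset: their hypothesis is precisely a single-TRO equivalence, so invoking them before the merge is circular. Second, the recombination step in Theorem \ref{forunitops}, $ii)\Rightarrow i)$, runs in the opposite direction: there one cuts a single TRO $N$ down by diagonal corners to obtain the two TROs $M_1,M_2$; nothing there merges two TROs into one. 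The gluing that is available for free, $M=\mathrm{diag}(M_2,M_1)$, implements a one-TRO equivalence only between the off-diagonal corner embeddings $\left(\begin{smallmatrix}0 & \cl X\\ 0 & 0\end{smallmatrix}\right)$ and $\left(\begin{smallmatrix}0 & \cl Y\\ 0 & 0\end{smallmatrix}\right)$, which are never unital, so it destroys exactly the property the lemma demands. Note also that your approximate-unit observation presupposes $I_K\in\psi(\cl Y)$ and $I_H\in\phi(\cl X)$ in the very representations furnished by $\sim_{\Delta}$; abstract unitality of $\cl X,\cl Y$ only guarantees that unital representations \emph{exist}, not that the TRO-equivalent ones are unital, and arranging this is part of what must be proved.

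The paper fills this gap with a stabilization argument absent from your proposal: $\cl X\sim_{TRO}K_{\infty}(\cl X)$ and $\cl Y\sim_{TRO}K_{\infty}(\cl Y)$, each implemented by a \emph{single} TRO (as in the proof of Theorem 2.1 in \cite{Ele14}); unitality of $\cl X,\cl Y$ forces, via Lemma 4.9 of \cite{Elekak}, the relevant corner algebras to be unital, whence $K_{\infty}(\cl X)\cong K_{\infty}(\cl Y)$ as $K_{\infty}$-operator modules; Lemma \ref{x} then transports the one-TRO equivalence $\cl Y\sim_{TRO}K_{\infty}(\cl Y)$ along this isomorphism to give $\zeta(\cl Y)\sim_{TRO}K_{\infty}(\cl X)$, and transitivity of single-TRO equivalence yields $\zeta(\cl Y)\sim_{TRO}\cl X$ with one TRO; finally Lemma \ref{xx} upgrades the representations to unital ones, after which unitality of $\overline{[L^{\star}L]}$ and $\overline{[LL^{\star}]}$ delivers the $\sigma$-TRO exactly as in your last paragraph. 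Without this $K_{\infty}$ detour, or some substitute for it, your outline names the obstacle but does not overcome it.
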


\begin{proof}

We have that $\cl Y\sim _{TRO} K_\infty (\cl Y)$, and the TRO equivalence is implemented by one TRO. Since $\cl X$ and $\cl Y$ are unital, 
by \cite{Elekak} $K_\infty (\cl Y) \cong K_\infty (\cl X) $ as 
$K_\infty -$ operator modules. Lemma \ref{x} implies that there exists a completely isometric map $\zeta : \cl Y\rightarrow \zeta (\cl Y)$
such that $\zeta (\cl Y)\sim _{TRO}K_\infty (\cl X)$, and this TRO equivalence is implemented by one TRO. Since $\cl X\sim _{TRO} K_\infty (\cl X) $
with one TRO as in the proof of Theorem 2.1 in \cite{Ele14}, we have that 
$\zeta (\cl Y)\sim _{TRO}\cl X$ with one TRO. From Lemma \ref{x}, given the complete isometry $\zeta ^{-1}: \zeta (\cl Y)\rightarrow \cl Y$, 
there exists 
a complete isometry 
$\phi : \cl X\rightarrow \phi (\cl X)$ and a TRO $M$ such that 
$$\cl Y=\overline{[M\phi (\cl X)M^\star ]},\;\;\;\phi (\cl X)=\overline{[M^\star \cl YM]}.$$
By Lemma 4.9 in \cite{Elekak}, the algebra $\overline{[M^\star M]}$ is unital, thus $\phi (\cl X)$ is a unital operator space. 
The map $\phi ^{-1}: \phi (\cl X)\rightarrow \cl X$ is a complete isometry, thus by Lemma \ref{xx} 
there exists a unital complete isometry $\psi : \cl Y\rightarrow \psi (\cl Y) $ and a TRO $L$ 
such that $$\psi (Y)=\overline{[L^\star \cl XL]}, \;\;\cl X=\overline{[L\psi (\cl S)L^\star ]}.$$

\end{proof}

If $\cl X$ is an operator space, we denote by $M_{\ell}(\cl X)$ (resp. $M_{r}(\cl X)$) the left (resp. right) multiplier algebra of $\cl X.$ We also denote $$\mathcal{A}_{\ell}(\cl X)=\Delta(M_{\ell}(\cl X))\,\,,\mathcal{A}_{r}(\cl X)=\Delta(M_{r}(\cl X))$$.

\begin{remark}\em{ If we consider $\cl X$ as unital subspace of its $C^{\star}$-envelope, $C^{\star}_{env}(\cl X),$ then by Proposition 4.3 in \cite{Ble01}, we have

$$M_{\ell}(\cl X)=\left\{a\in C^{\star}_{env}(\cl X):a\,\cl X\subseteq \cl X\right\}$$ and

$$M_{r}(\cl X)=\left\{a\in C^{\star}_{env}(\cl X):\cl X\,a\subseteq \cl X\right\}$$}

\end{remark}

\begin{lemma}

\label{lem}

If $\cl X\,,\cl Y$ are $\Delta$-equivalent unital operator spaces, we can consider that $\cl X\subseteq C^{\star}_{env}(\cl X)\subseteq \mathbb{B}(H)\,\,,\cl Y\subseteq C^{\star}_{env}(\cl Y)\subseteq \mathbb{B}(K)$ and there exists a TRO $M\subseteq \mathbb{B}(H,K)$ such that $\,\cl X=\overline{[M^{\star}\,\cl Y\,M]}\,\,,\cl Y=\overline{[M\,\cl X\,M^{\star}]}$ and also

\begin{center}

 \,\,\,\,\,$C^{\star}_{env}(\cl X)=\overline{[M^{\star}\,C^{\star}_{env}(\cl Y)\,M]}\,,C^{\star}_{env}(\cl Y)=\overline{[M\,C^{\star}_{env}(\cl X)\,M]}$

\end{center}

\begin{center}

 $M_{l}(\cl X)=\overline{[M^{\star}\,M_{l}(\cl Y)\,M]}\,\,,M_{l}(\cl Y)=\overline{[M\,M_{l}(\cl X)\,M^{\star}]}$

\end{center}

\begin{center}

 $M_{r}(\cl X)=\overline{[M^{\star}\,M_{r}(\cl Y)\,M]}\,\,,M_{r}(\cl Y)=\overline{[M\,M_{r}(\cl X)\,M^{\star}]}$ 

\end{center}

\end{lemma}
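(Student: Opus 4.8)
The plan is to reduce everything to a concrete unital TRO-equivalence between $\cl X$ and $\cl Y$ via Lemma \ref{unit}, and then to push this equivalence outward through three layers: the generated $C^\star$-algebras, the $C^\star$-envelopes, and finally the multiplier algebras. First I would apply Lemma \ref{unit} to obtain completely isometric unital maps $\phi:\cl X\to\bb B(H)$, $\psi:\cl Y\to\bb B(K)$ (with $I_H\in\phi(\cl X)$, $I_K\in\psi(\cl Y)$) and a $\sigma$-TRO $L\subseteq\bb B(K,H)$ implementing the equivalence. Setting $M=L^\star\subseteq\bb B(H,K)$ and identifying $\cl X,\cl Y$ with their images gives $\cl X=\overline{[M^\star\,\cl Y\,M]}$ and $\cl Y=\overline{[M\,\cl X\,M^\star]}$, the first pair of claimed identities. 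Writing $\cl A=\overline{[M^\star M]}$, $\cl B=\overline{[MM^\star]}$, unitality forces $I_H\in\cl A$, $I_K\in\cl B$: if $e$ is the unit of the (unital, by Lemma 4.9 of \cite{Elekak}) algebra $\cl A$, then $me=m$ for all $m\in M$, hence $e\,m^\star=m^\star$ and $e\,x=x$ on the dense set $M^\star\cl Y M$, so $e=e\,I_H=I_H$. A short TRO computation using $\overline{[M^\star M M^\star]}=M^\star$ then yields the module identities $\overline{[\cl A\,\cl X]}=\cl X=\overline{[\cl X\,\cl A]}$ and $\overline{[\cl B\,\cl Y]}=\cl Y=\overline{[\cl Y\,\cl B]}$, so that $\cl A\subseteq M_{\ell}(\cl X)\cap M_{r}(\cl X)$ and $\cl B\subseteq M_{\ell}(\cl Y)\cap M_{r}(\cl Y)$.

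Next I would treat the generated $C^\star$-algebras $\cl C=C^\star(\cl X)\subseteq\bb B(H)$ and $\cl D=C^\star(\cl Y)\subseteq\bb B(K)$. Since $\cl B\subseteq\cl D$ and $\cl D$ is an algebra, $\overline{[M^\star\,\cl D\,M]}$ is closed under products (as $M^\star\cl D(MM^\star)\cl D M\subseteq M^\star\cl D\cl B\cl D M\subseteq M^\star\cl D M$) and adjoints, hence is a $C^\star$-algebra containing $\cl X$; conversely, a word in $\cl D$ built from $\cl Y=\overline{[M\cl X M^\star]}$ has the form $Mz_1M^\star\cdots Mz_kM^\star$ with $z_i\in\cl X\cup\cl X^\star$, and $M^\star(\cdot)M$ applied to it equals the alternating product $\cl A z_1\cl A\cdots z_k\cl A\subseteq C^\star(\cl X)$. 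This gives $\cl C=\overline{[M^\star\,\cl D\,M]}$ and, symmetrically, $\cl D=\overline{[M\,\cl C\,M^\star]}$, i.e. $\cl C\sim_{TRO}\cl D$ via the same $M$.

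The main obstacle is to replace the generated $C^\star$-algebras $\cl C,\cl D$ by the $C^\star$-envelopes $C^\star_{env}(\cl X),C^\star_{env}(\cl Y)$. There are canonical quotients $q_X:\cl C\to C^\star_{env}(\cl X)$, $q_Y:\cl D\to C^\star_{env}(\cl Y)$ whose kernels are the Shilov boundary ideals, and I must verify that $\ker q_X$ and $\ker q_Y$ correspond under the Rieffel ideal-lattice isomorphism attached to the equivalence bimodule $M$, namely $\overline{[M\,(\ker q_X)\,M^\star]}=\ker q_Y$. This should follow from the characterisation of the boundary ideal as the largest ideal whose quotient map stays completely isometric on $\cl X$ (resp.\ $\cl Y$), together with the fact that $\cl Y=\overline{[M\cl X M^\star]}$ transports complete isometry on $\cl X$-quotients to complete isometry on $\cl Y$-quotients; here I would invoke the corresponding $C^\star$-envelope statements of \cite{Elekak}. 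Passing to quotients by these matching ideals, the induced quotient TRO implements $C^\star_{env}(\cl X)\sim_{TRO}C^\star_{env}(\cl Y)$, and after re-embedding we obtain $\cl X\subseteq C^\star_{env}(\cl X)\subseteq\bb B(H)$, $\cl Y\subseteq C^\star_{env}(\cl Y)\subseteq\bb B(K)$ and a single TRO $M$ realising the first two pairs of identities in the statement.

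Finally I would deduce the multiplier identities from the envelope identities and the characterisation $M_{\ell}(\cl X)=\{a\in C^\star_{env}(\cl X):a\,\cl X\subseteq\cl X\}$ from the preceding Remark (and its analogue for $M_{r}$). For the forward inclusion, if $a\in M_{\ell}(\cl X)$ then $(MaM^\star)\cl Y=\overline{[M\,a\,\overline{[M^\star M]}\,\cl X\,M^\star]}=\overline{[M\,a\,\cl A\,\cl X\,M^\star]}\subseteq\overline{[M\,\cl X\,M^\star]}=\cl Y$, using $\cl A\cl X\subseteq\cl X$ and $a\cl X\subseteq\cl X$; since also $MaM^\star\in C^\star_{env}(\cl Y)$, this gives $\overline{[M\,M_{\ell}(\cl X)\,M^\star]}\subseteq M_{\ell}(\cl Y)$. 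Running the same argument with $\cl X,\cl Y$ and $M,M^\star$ interchanged yields $\overline{[M^\star\,M_{\ell}(\cl Y)\,M]}\subseteq M_{\ell}(\cl X)$; sandwiching by $M(\cdot)M^\star$ and using $\overline{[MM^\star]}=\cl B\subseteq M_{\ell}(\cl Y)$ with $I_K\in\cl B$ gives $M_{\ell}(\cl Y)=\overline{[\cl B\,M_{\ell}(\cl Y)\,\cl B]}\subseteq\overline{[M\,M_{\ell}(\cl X)\,M^\star]}$, hence equality. The identities for $M_{r}$ follow verbatim using right actions, which completes the proof.
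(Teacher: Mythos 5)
Your proposal is correct and follows essentially the same route as the paper: reduce via Lemma \ref{unit} to unital completely isometric representations that are TRO-equivalent by a single TRO, import the $C^{\star}_{env}$ identities from Theorem 5.10 of \cite{Elekak} (your detour through the generated $C^{\star}$-algebras and the Shilov-boundary-ideal correspondence is just a sketch of what that cited result proves, and both you and the paper ultimately rest on it), and then establish the multiplier identities by the same two inclusions $M^{\star}\,M_{l}(\cl Y)\,M\subseteq M_{l}(\cl X)$ and $M\,M_{l}(\cl X)\,M^{\star}\subseteq M_{l}(\cl Y)$ followed by sandwiching with $\overline{[M^{\star}M]}$ (resp. $\overline{[MM^{\star}]}$), whose unitality forces equality. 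Your explicit check that the units of $\overline{[M^{\star}M]}$ and $\overline{[MM^{\star}]}$ coincide with $I_H$ and $I_K$ makes precise the step the paper leaves implicit in its terse final line ``but $M^{\star}\,M\,C^{\star}_{env}(\cl X)=C^{\star}_{env}(\cl X)$''.
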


\begin{proof}

From Lemma \ref{unit}, we may assume that $\cl X$ and $\cl Y$ have TRO equivalent completely isometric representations whose images are TRO equivalent by one TRO.
Using this fact and the proof of Theorem 5.10 in \cite{Elekak}, we 
may consider that there exists a TRO $M$ such that 
$$C^{\star}_{env}(\cl X)=\overline{[M^{\star}\,C^{\star}_{env}(\cl Y)\,M]}\,,C^{\star}_{env}(\cl Y)=\overline{[M\,C^{\star}_{env}(\cl X)\,M]}.$$ 
Let us prove that $M_{l}(\cl X)=\overline{[M^{\star}\,M_{l}(\cl Y)\,M]}.$ Let $a\in M_{l}(\cl Y)$, that is $a\in C^{\star}_{env}(\cl Y)$ and $a\,\cl Y\subseteq \cl Y.$ For all $m\,,n\in M$, we have that $m^{\star}\,a\,n\in C^{\star}_{env}(\cl Y)$ and $$m^{\star}\,a\,n\,\cl X=m^{\star}\,a\,n\,M^{\star}\,\cl Y\,M\subseteq m^{\star}\,a\,\cl Y\,M\subseteq M^{\star}\,\cl Y\,M=\cl X,$$
so $m^{\star}\,a\,n\in M_{l}(\cl X)$, that is $M^{\star}\,M_{l}(\cl Y)\,M\subseteq M_{l}(\cl X).$ Similarly, $M\,M_{l}(\cl X)\,M^{\star}\subseteq M_{l}(\cl Y)$, so $$M^{\star}\,M\,M_{l}(\cl X)\,M^{\star}\,M\subseteq M^{\star}\,M_{l}(\cl Y)\,M\subseteq M_{l}(\cl X),$$ but $M^{\star}\,M\,C^{\star}_{env}(\cl X)=C^{\star}_{env}(\cl X)$.

\end{proof}

The proof of the previous Lemma implies the following corollary:

\begin{corollary}

If $\cl X\,,\cl Y$ are $\Delta$-equivalent unital operator spaces, then $M_{l}(\cl X)\sim_{\Delta} M_{l}(\cl Y)$, and thus $M_{l}(\cl X)$ and $M_{l}(\cl Y)$ are stably isomorphic. The same assertion holds for $M_{r}(\cl X)$ and $M_{r}(\cl Y).$

\end{corollary}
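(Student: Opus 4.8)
The plan is to leverage Lemma \ref{lem} directly, which already establishes the crucial algebraic identities connecting the multiplier algebras through a single TRO. Specifically, from the hypothesis that $\cl X\sim_\Delta\cl Y$ are unital operator spaces, Lemma \ref{lem} provides completely isometric representations $\cl X\subseteq \mathbb{B}(H)$, $\cl Y\subseteq \mathbb{B}(K)$ and a TRO $M\subseteq \mathbb{B}(H,K)$ with
$$M_l(\cl X)=\overline{[M^\star\,M_l(\cl Y)\,M]}\,,\quad M_l(\cl Y)=\overline{[M\,M_l(\cl X)\,M^\star]}.$$
The first step is to observe that these two relations are precisely the defining identities for $M_l(\cl X)\sim_{TRO}M_l(\cl Y)$, hence $M_l(\cl X)\sim_\Delta M_l(\cl Y)$ by the definition of $\Delta$-equivalence of operator algebras.

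Next I would upgrade $\sim_\Delta$ to $\sim_{\sigma\Delta}$, since the stable isomorphism conclusion requires a $\sigma$-TRO rather than an arbitrary TRO. The key point here is that the multiplier algebras $M_l(\cl X)$, $M_l(\cl Y)$ are unital $C^\star$-algebras (or at least possess units, as they contain the identity of $C^\star_{env}$), so the associated $C^\star$-algebras $\overline{[M^\star M]}$ and $\overline{[MM^\star]}$ are unital and in particular $\sigma$-unital. By the equivalence established in the earlier $\sigma$-TRO discussion (a TRO whose associated $C^\star$-algebras have $\sigma$-units is a $\sigma$-TRO), the TRO $M$ witnessing the equivalence is automatically a $\sigma$-TRO. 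Therefore $M_l(\cl X)\sim_{\sigma\Delta}M_l(\cl Y)$.

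Finally, I would invoke the theorem of Eleftherakis cited in the introduction, namely that $\mathcal{A}\sim_{\sigma\Delta}\mathcal{B}$ if and only if $\mathcal{A}$ and $\mathcal{B}$ are stably isomorphic (\cite{Elest}). Applying this to $\mathcal{A}=M_l(\cl X)$ and $\mathcal{B}=M_l(\cl Y)$ yields that the two left multiplier algebras are stably isomorphic, completing the argument for the left case. The identical chain of reasoning applies verbatim to the right multiplier algebras $M_r(\cl X)$ and $M_r(\cl Y)$, using the third pair of identities in Lemma \ref{lem}.

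The main obstacle I anticipate is the verification in the second step that the TRO $M$ is genuinely a $\sigma$-TRO, which hinges on confirming that $\overline{[M^\star M]}$ and $\overline{[MM^\star]}$ are exactly the unital $C^\star$-algebras arising inside the multiplier algebras (rather than strictly smaller corner algebras without units). One must check that the units of these $C^\star$-algebras act as identities on $M$ in the appropriate sense, so that the single-element "sequences" trivially satisfy the $\sigma$-TRO approximation conditions. Once the unitality is pinned down, the rest is a direct chaining of the cited equivalences.
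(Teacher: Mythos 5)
Your proposal is correct and follows essentially the same route as the paper, which derives the corollary directly from Lemma \ref{lem}: the displayed TRO identities give $M_{l}(\cl X)\sim_{\Delta}M_{l}(\cl Y)$, the linking $C^{\star}$-algebras $\overline{[M^{\star}M]}$ and $\overline{[MM^{\star}]}$ are unital, hence $M$ is a $\sigma$-TRO and stable isomorphism follows from \cite{Elest}. The one imprecision, which you correctly flag as the obstacle, is that unitality of these algebras does not come from unitality of the multiplier algebras themselves but from Lemma 4.9 of \cite{Elekak} applied to the unital operator spaces $\cl X$ and $\cl Y$ (the approximate unit of $\overline{[M^{\star}M]}$ converges to $I_{H}\in\cl X$, forcing $I_{H}\in\overline{[M^{\star}M]}$), exactly as the paper uses it in Lemma \ref{unit} and in the proof that $\Delta$-equivalence of unital spaces upgrades to $\sigma\Delta$-equivalence.
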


\begin{definition}

If $\cl X$ is an operator space, then we define the operator algebra $$\Omega_{\cl X}=\begin{pmatrix}A_{l}(\cl X) & \cl X\\
0 & A_{r}(\cl X)\end{pmatrix}$$

\end{definition}

\begin{theorem}

If $\cl X\,,\cl Y$ are unital operator spaces, the following are equivalent:\\
$i)$ $\cl X$ and $\cl Y$ are stably isomorphic.\\
$ii)$ $\cl X\sim_{\sigma\,\Delta} \cl Y.$\\
$iii)$ $\cl X\sim_{\Delta} \cl Y$\\
$iv)$ $\Omega_{\cl X}$ and $\Omega_{\cl Y}$ are stably isomorphic.\\
$v)$ $\Omega_{\cl X}\sim_{\sigma\,\Delta}\Omega_{\cl Y}$\\
$vi)$ $\Omega_{\cl X}\sim_{\Delta} \Omega_{\cl Y}.$

\end{theorem}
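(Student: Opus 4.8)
The plan is to establish the cycle of implications by routing all equivalences through the results already proved in sections 2 and 3. The cleanest strategy is to prove $(iii)\Leftrightarrow(vi)$, $(iii)\Leftrightarrow(ii)\Leftrightarrow(i)$, and $(vi)\Leftrightarrow(v)\Leftrightarrow(iv)$, after which the two blocks are glued by the single link $(iii)\Leftrightarrow(vi)$. For the equivalences within the operator-space world, namely $(i)\Leftrightarrow(ii)\Leftrightarrow(iii)$, I would simply invoke Theorem \ref{the}: that theorem already asserts that $\cl X\sim_{\sigma\Delta}\cl Y$, $\sigma\Delta$-Morita equivalence, and stable isomorphism are all equivalent, and it gives $\cl X\sim_\Delta\cl Y$ as well. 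The only subtlety is that $(iii)$ uses $\sim_\Delta$ rather than $\sim_{\sigma\Delta}$; here the hypothesis that $\cl X,\cl Y$ are \emph{unital} is what forces the implementing TROs to be $\sigma$-TROs (the diagonal $C^\star$-algebras are unital, hence $\sigma$-unital), so $(iii)\Rightarrow(ii)$ is automatic for unital spaces, while $(ii)\Rightarrow(iii)$ is trivial.

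The heart of the theorem is the link $(iii)\Leftrightarrow(vi)$ connecting the operator spaces to the algebras $\Omega_{\cl X},\Omega_{\cl Y}$. The forward direction $(iii)\Rightarrow(vi)$ is exactly what Lemma \ref{lem} delivers: assuming $\cl X\sim_\Delta\cl Y$ as unital operator spaces, Lemma \ref{lem} produces a single TRO $M$ implementing simultaneously
\[
M_l(\cl X)=\overline{[M^\star M_l(\cl Y)M]},\quad M_r(\cl X)=\overline{[M^\star M_r(\cl Y)M]},\quad \cl X=\overline{[M^\star\cl Y M]},
\]
together with the reverse relations. Taking diagonals gives $A_l(\cl X)=\Delta(M_l(\cl X))=\overline{[M^\star A_l(\cl Y)M]}$ and likewise for $A_r$, so conjugation by the diagonal TRO $\widetilde M=\mathrm{diag}(M,M)$ sends $\Omega_{\cl Y}$ to $\Omega_{\cl X}$ block by block, yielding $\Omega_{\cl X}\sim_{TRO}\Omega_{\cl Y}$ and hence $\Omega_{\cl X}\sim_\Delta\Omega_{\cl Y}$. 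For the converse $(vi)\Rightarrow(iii)$ I would argue as in the proof of Theorem \ref{forunitops}: compress the TRO equivalence $\Omega_{\cl X}\sim_\Delta\Omega_{\cl Y}$ to the upper-right corners using the diagonal projections, extract corner TROs $M_2,M_1$ from the implementing TRO, and recover $\cl X\sim_{TRO}\cl Y$ in the off-diagonal blocks, so that $\cl X\sim_\Delta\cl Y$.

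Once $(iii)\Leftrightarrow(vi)$ is in place, the block $(iv)\Leftrightarrow(v)\Leftrightarrow(vi)$ follows by applying the already-proved machinery to the $\Delta$-pairs $(\Omega_{\cl X},\Delta(\Omega_{\cl X}))$ and $(\Omega_{\cl Y},\Delta(\Omega_{\cl Y}))$. Since $\cl X,\cl Y$ are unital, $A_l(\cl X),A_r(\cl X)$ are unital $C^\star$-algebras, so these pairs are $\sigma\Delta$-pairs; the second $\sigma\Delta$-theorem of section 2 then gives $\Omega_{\cl X}\sim_{\sigma\Delta}\Omega_{\cl Y}\Leftrightarrow$ stable isomorphism of $\Omega_{\cl X},\Omega_{\cl Y}\Leftrightarrow\Delta$-Morita equivalence of the pairs, and the latter is $\Omega_{\cl X}\sim_\Delta\Omega_{\cl Y}$. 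I expect the main obstacle to be the converse direction $(vi)\Rightarrow(iii)$: one must verify that the corner compression genuinely recovers a TRO equivalence of $\cl X$ and $\cl Y$ and not merely a weaker relation, and in particular that the off-diagonal block $\begin{pmatrix}0&\cl X\\0&0\end{pmatrix}$ is TRO-equivalent to $\cl X$ itself—precisely the compression computation carried out in Theorem \ref{forunitops}, which must be adapted here to the multiplier-algebra diagonals rather than to arbitrary $C^\star$-algebras $D_i,E_i$.
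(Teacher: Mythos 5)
Your proposal follows the paper's architecture almost step for step: $(i)\Leftrightarrow(ii)$ is Theorem \ref{the}; $(ii)\Rightarrow(iii)$ is trivial; $(iii)\Rightarrow(ii)$ uses unitality to force a $\sigma$-TRO, exactly as the paper does via Lemma 4.9 of \cite{Elekak} (unitality of $\overline{[M^{\star}M]}$ and $\overline{[MM^{\star}]}$); and $(iii)\Rightarrow(vi)$ is Lemma \ref{lem} followed by conjugation with $\mathrm{diag}(M,M)$, which is the paper's computation verbatim. Note also that your anticipated ``main obstacle'' in $(vi)\Rightarrow(iii)$ dissolves: since $A_{l}(\cl X), A_{r}(\cl X)$ are unital, $\Omega_{\cl X}$ and $\Omega_{\cl Y}$ are algebraic $\Delta$-extensions of $\cl X,\cl Y$ in the sense of Definition 3.3, so the corner compression you plan to redo is literally the special case $D_2=A_{l}(\cl X)$, $D_1=A_{r}(\cl X)$, $E_2=A_{l}(\cl Y)$, $E_1=A_{r}(\cl Y)$ of Theorem \ref{forunitops}$(ii)\Rightarrow(i)$; no adaptation is needed, and the paper simply cites that theorem. (Both your route and the paper's tacitly use that a TRO equivalence of the algebras $\Omega_{\cl X},\Omega_{\cl Y}$ passes to their diagonals with the same TRO, so that the pairs $(\Omega_{\cl X},\Delta(\Omega_{\cl X}))$, $(\Omega_{\cl Y},\Delta(\Omega_{\cl Y}))$ are $\Delta$-Morita equivalent; this diagonal-passing fact, cf.\ \cite{Ele12}, is left implicit in the paper too.)

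The one place where your routing would fail as written is the block $(iv)\Leftrightarrow(v)$. The $\sigma\Delta$-theorem of Section 2 that you invoke does not list plain stable isomorphism among its equivalent conditions: its third condition is the existence of a completely isometric isomorphism $\phi:\mathcal{A}\otimes\cl K\to\mathcal{B}\otimes\cl K$ satisfying \emph{additionally} $\phi(C\otimes\cl K)=D\otimes\cl K$, which is a priori stronger than condition $(iv)$, since $(iv)$ gives no control over the diagonals. So that theorem does not deliver $(iv)\Rightarrow(v)$. The correct tool --- and what the paper's laconic ``Similarly, $iv)\iff v)\iff vi)$'' refers to --- is the theorem of \cite{Elest} quoted in the introduction, that $\mathcal{A}\sim_{\sigma\Delta}\mathcal{B}$ if and only if $\mathcal{A},\mathcal{B}$ are stably isomorphic, applied to the unital algebras $\Omega_{\cl X},\Omega_{\cl Y}$; the implication $(vi)\Rightarrow(v)$ then again comes from unitality via Lemma 4.9 of \cite{Elekak}, in exact parallel with $(iii)\Rightarrow(ii)$. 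With that substitution your proof closes and is essentially the paper's.
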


\begin{proof}

We have proved the equivalence $i)\iff ii)$ at the Theorem \ref{the}. Also, $ii)\implies iii)$ is obvious.\\

$iii)\implies ii)$ Suppose that $\phi(\cl X)=\overline{[M^{\star}\,\psi(\cl Y)\,M]}\,\,,\psi(\cl Y)=\overline{[M\,\phi(\cl X)\,M^{\star}]}$ for some TRO $M.$ By Lemma 4.9 in \cite{Elekak}, the $C^{\star}$-algebras $\overline{[M^{\star}\,M]}\,\,,\overline{[M\,M^{\star}]}$ are unital, so it follows that $\cl X\sim_{\sigma\,\Delta} \cl Y.$

Similarly, we have the equivalence $iv)\iff v)\iff vi).$ It remains to prove that $iii)\iff vi).$\\

$iii)\implies vi)$. If $\cl X\sim_{\Delta} \cl Y$, then by Lemma \ref{lem}, there exists a TRO $M$ such that $$\cl X=\overline{[M^{\star}\,\cl Y\,M]}\,\,,\cl Y=\overline{[M\,\cl X\,M^{\star}]}$$ $$M_{l}(\cl X)=\overline{[M^{\star}\,M_{l}(\cl Y)\,M]}\,\,,M_{l}(\cl Y)=\overline{[M\,M_{l}(\cl X)\,M^{\star}]}$$ 

$$M_{r}(\cl X)=\overline{[M^{\star}\,M_{r}(\cl Y)\,M]}\,\,,M_{r}(\cl Y)=\overline{[M\,M_{r}(\cl X)\,M^{\star}]}$$

Since $A_{l}(\cl X)=\Delta(M_{l}(\cl X))\,\,,A_{l}(\cl Y)=\Delta(M_{l}(\cl Y))\,\,,A_{r}(\cl X)=\Delta(M_{r}(\cl X))\,,A_{r}(\cl Y)=\Delta(M_{r}(\cl Y))$, we get $$A_{l}(\cl X)=\overline{[M^{\star}\,A_{l}(\cl Y)\,M]}\,\,,A_{l}(\cl Y)=\overline{[M\,A_{l}(\cl X)\,M^{\star}]}$$

$$A_{r}(\cl X)=\overline{[M^{\star}\,A_{r}(\cl Y)\,M]}\,\,,A_{r}(\cl Y)=\overline{[M\,A_{r}(\cl X)\,M^{\star}]},$$

so \begin{align*}
 \Omega_{\cl X}&=\begin{pmatrix}A_{l}(\cl X) & \cl X\\
 0 & A_{r}(X)\end{pmatrix}=
  \begin{pmatrix}M^{\star}\,A_{l}(\cl Y)\,M & M^{\star}\,\cl Y\,M\\
 0 & M^{\star}\,A_{r}(\cl Y)\,M\end{pmatrix}\\
 &=\begin{pmatrix}M^{\star} & 0\\
 0 & M^{\star}\end{pmatrix}\,\begin{pmatrix}A_{l}(\cl Y) & \cl Y\\
 0 & A_{r}(\cl Y)\end{pmatrix}\,\begin{pmatrix}M & 0\\
 0 & M\end{pmatrix}\\&=
  \begin{pmatrix}M & 0\\
 0 & M\end{pmatrix}^{\star}\,\begin{pmatrix}A_{l}(\cl Y) & \cl Y\\
 0 & A_{r}(\cl Y)\end{pmatrix}\,\begin{pmatrix}M & 0\\
 0 & M\end{pmatrix}
\end{align*}

where $\begin{pmatrix}M & 0\\
0 & M\end{pmatrix}$ is TRO. Similarly, $\Omega_{\cl Y}=\begin{pmatrix}M & 0\\
 0 & M\end{pmatrix}\,\begin{pmatrix}A_{l}(\cl Y) & \cl Y\\
 0 & A_{r}(\cl Y)\end{pmatrix}\,\begin{pmatrix}M & 0\\
 0 & M\end{pmatrix}^{\star}$, and we conclude that $\Omega_{\cl X}\sim_{\Delta}\Omega_{Y}$ (Theorem \ref{main}).\\

$vi)\implies iii)$ Let $\Omega_{\cl X}\sim_{\Delta}\Omega_{\cl Y}.$ The operator algebras $\Omega_{\cl X}\,,\Omega_{\cl Y}$ are $\Delta$-algebraic extensions of $\cl X\,,\cl Y$, respectively, so $\cl X\,,\cl Y$ are $\Delta$-Morita equivalent. According to Theorem \ref{forunitops}, we conclude that $\cl X\sim_{\Delta} \cl Y.$

\end{proof}

\begin{corollary}

If $\cl X\,,\cl Y$ are unital operator spaces, the following are equivalent:\\
$i)$ $\cl X\sim_{\Delta} \cl Y$\\
$ii)$ $\cl X$ and $\cl Y$ are $\Delta$-Morita equivalent.\\
$iii)$ The $\Delta$-pairs $(\Omega_{\cl X},\Delta(\Omega_{\cl X}))\,\,,(\Omega_{\cl Y},\Delta(\Omega_{\cl Y}))$ are $\Delta$-Morita equivalent.

\end{corollary}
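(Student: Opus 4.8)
The plan is to dispatch $i)\iff ii)$ and $iii)\implies ii)$ almost for free and to concentrate the real work on $i)\implies iii)$. For the first, $i)\iff ii)$ is just Theorem \ref{forunitops} specialised to the (unital, hence ordinary) operator spaces $\cl X,\cl Y$, so nothing new is required. For $iii)\implies ii)$ I would only unwind definitions: $\Omega_{\cl X}$ and $\Omega_{\cl Y}$ are by construction algebraic $\Delta$-extensions of $\cl X$ and $\cl Y$ — their $(2,1)$-corners are $\cl X,\cl Y$ and their diagonals are the $C^\star$-algebras $\Delta(\Omega_{\cl X}),\Delta(\Omega_{\cl Y})$ — so the hypothesis that the pairs $(\Omega_{\cl X},\Delta(\Omega_{\cl X}))$ and $(\Omega_{\cl Y},\Delta(\Omega_{\cl Y}))$ are $\Delta$-Morita equivalent is, verbatim, the statement that $\cl X$ and $\cl Y$ are $\Delta$-Morita equivalent.

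For $i)\implies iii)$ I would reuse the construction already carried out in the preceding theorem. Starting from $\cl X\sim_\Delta\cl Y$, Lemma \ref{lem} furnishes a single TRO $M$ implementing
\[\cl X=\overline{[M^\star\cl Y M]},\quad M_l(\cl X)=\overline{[M^\star M_l(\cl Y)M]},\quad M_r(\cl X)=\overline{[M^\star M_r(\cl Y)M]}\]
together with the symmetric relations; passing to diagonals gives the analogous identities for $A_l,A_r$. Forming the block TRO $\tilde M=\begin{pmatrix}M&0\\0&M\end{pmatrix}$ I obtain $\Omega_{\cl X}=\overline{[\tilde M^\star\Omega_{\cl Y}\tilde M]}$, $\Omega_{\cl Y}=\overline{[\tilde M\Omega_{\cl X}\tilde M^\star]}$, and simultaneously $\Delta(\Omega_{\cl X})=\overline{[\tilde M^\star\Delta(\Omega_{\cl Y})\tilde M]}$, $\Delta(\Omega_{\cl Y})=\overline{[\tilde M\Delta(\Omega_{\cl X})\tilde M^\star]}$. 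Hence one and the same TRO $\tilde M$ implements both $\Omega_{\cl X}\sim_\Delta\Omega_{\cl Y}$ and $\Delta(\Omega_{\cl X})\sim_\Delta\Delta(\Omega_{\cl Y})$, and Theorem \ref{main} then yields the desired $\Delta$-Morita equivalence of the pairs.

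The step I expect to be the crux is the clause ``implemented by the same TRO'' in Theorem \ref{main}: separately knowing $\Omega_{\cl X}\sim_\Delta\Omega_{\cl Y}$ and $\Delta(\Omega_{\cl X})\sim_\Delta\Delta(\Omega_{\cl Y})$ does not suffice; I must exhibit one TRO compatible with both levels. This is exactly what Lemma \ref{lem} secures, since it transports the multiplier algebras (and therefore their diagonals) along the very TRO that implements $\cl X\sim_\Delta\cl Y$, so that $\tilde M$ serves the algebra and its diagonal at once. The only remaining verification, which is routine, is that $(\Omega_{\cl X},\Delta(\Omega_{\cl X}))$ is genuinely a $\Delta$-pair; this follows from unitality of $\cl X$, which makes $A_l(\cl X),A_r(\cl X)$ unital and gives $\overline{[A_l(\cl X)\cl X]}=\overline{[\cl X A_r(\cl X)]}=\cl X$.
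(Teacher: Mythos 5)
Your proposal is correct and follows essentially the same route as the paper: $i)\iff ii)$ via Theorem \ref{forunitops}, $iii)\implies ii)$ by observing that $\Omega_{\cl X},\Omega_{\cl Y}$ are algebraic $\Delta$-extensions, and $i)\implies iii)$ by invoking Lemma \ref{lem} to get one TRO $M$ transporting $\cl X$, $A_l$, and $A_r$ simultaneously, then conjugating by the block TRO $\begin{pmatrix}M&0\\0&M\end{pmatrix}$ and applying Theorem \ref{main}. Your closing remarks correctly identify the ``same TRO'' clause as the crux and supply the (routine, unitality-based) check that $(\Omega_{\cl X},\Delta(\Omega_{\cl X}))$ is a $\Delta$-pair, which the paper leaves implicit.
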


\begin{proof}

$i)\iff ii)$ It has been proven previously at Theorem \ref{forunitops}.\\

$iii)\implies ii)$ It is obvious since $\Omega_{\cl X}\,,\Omega_{\cl Y}$ are algebraic $\Delta$-extensions of $\cl X\,,\cl Y$, respectively.\\

$i)\implies iii)$ We may consider, using again the Lemma \ref{lem}, that there exists a TRO $M$ such that $$\cl X=\overline{[M^{\star}\,\cl Y\,M]}\,\,,\cl Y=\overline{[M\,\cl X\,M^{\star}]}$$ $$A_{l}(\cl X)=\overline{[M^{\star}\,A_{l}(\cl Y)\,M]}\,,A_{l}(\cl Y)=\overline{[M\,A_{l}(\cl X)\,M^{\star}]}$$ $$A_{r}(\cl X)=\overline{[M^{\star}\,A_{r}(\cl Y)\,M]}\,\,,A_{r}(\cl Y)=\overline{[M\,A_{r}(\cl X)\,M^{\star}]}$$

Using the TRO $N=\begin{pmatrix}M & 0\\

0 & M\end{pmatrix}$, we have that $\Omega_{\cl X}=\overline{[N^{\star}\,\Omega_{\cl Y}\,N]}\,\,,\Omega_{\cl Y}=\overline{[N\,\Omega_{\cl X}\,N^{\star}]}$. Also, $$\Delta(\Omega_{\cl X})=\begin{pmatrix}A_{l}(\cl X) & 0\\

0 & A_{r}(\cl X)\end{pmatrix}\,\,,\Delta(\Omega_{\cl Y})=\begin{pmatrix}A_{l}(\cl Y) & 0\\

0 & A_{r}(\cl Y)\end{pmatrix},$$
and it is obvious that $\Delta(\Omega_{\cl X})=\overline{[N^{\star}\,\Delta(\Omega_{\cl Y})\,N]}\,\,,\Delta(\Omega_{\cl Y})=\overline{[N\,\Delta(\Omega_{\cl X})\,N^{\star}]}$,
so $\Omega_{\cl X}\sim_{\Delta} \Omega_{\cl Y}$ and $\Delta(\Omega_{\cl X})\sim_{\Delta}\Delta(\Omega_{\cl Y})$ with the same TRO, which means that the $\Delta$-pairs $(\Omega_{\cl X},\Delta(\Omega_{\cl X}))\,\,,(\Omega_{\cl Y},\Delta(\Omega_{\cl Y}))$ are $\Delta$-Morita equivalent.

\end{proof}






\noindent












\end{document}